\documentclass[11pt, reqno]{amsart}
\usepackage[dvipsnames]{xcolor}
\usepackage{amscd, amsmath, amssymb, amsthm, yfonts, mathrsfs, hhline, tikz}
\usetikzlibrary{arrows, decorations.markings, patterns}
\tikzset{>=latex}
\usepackage[centering,margin=1.2in,footskip=1cm]{geometry}
\usepackage[matrix,arrow,curve]{xy}

\usepackage{tikz-cd}

\relpenalty=10000
\global\binoppenalty=10000

% ===============================================================
% Theorems
% ===============================================================

\newtheorem{theorem}{Theorem}[section]
\newtheorem{lemma}[theorem]{Lemma}
\newtheorem{prop}[theorem]{Proposition}
\newtheorem{cor}[theorem]{Corollary}

\theoremstyle{definition}
\newtheorem{defn}[theorem]{Definition}
\newtheorem{remark}[theorem]{Remark}
%[subsection]

\numberwithin{equation}{section}

% ===============================================================
% Equations
% ===============================================================

\def\beq{\begin{equation}}
\def\eeq{\end{equation}}

% ===============================================================
% Arrows
% ===============================================================

\def\longra{\longrightarrow}

% ===============================================================
% Braces
% ===============================================================

\newcommand{\hr}[1]{\left(#1\right)}                                                    % round, aligned
\newcommand{\hm}[1]{\left|#1\right|}                                                    % modulo, aligned
                                                  % norm, aligned
\newcommand{\ha}[1]{\left\langle#1\right\rangle}                                        % angle, aligned
                                         % ceil, aligned
                                       % floor, aligned
\newcommand{\hs}[1]{\left[#1\right]}                                                    % square, aligned
\newcommand{\hc}[1]{\left\{#1\right\}}                                                  % calligraphic, aligned
                                                   % round/sqaure, aligned
                                                   % square/round, aligned

% ===============================================================
% Other
% ===============================================================

\def\le{\leqslant}
\def\ge{\geqslant}

% ===============================================================
% Local notations
% ===============================================================

\def\Ac{\mathcal A}
\def\Ad{\operatorname{Ad}}

\def\bs{\boldsymbol}

\def\C{\mathbb C}

\def\Dc{\mathcal D}

\def\eps{\varepsilon}

\def\Fc{\mathcal F}
\def\Frac{\operatorname{Frac}}

\def\Gc{\mathcal G}

\def\Hbb{\mathbb H}
\def\Hc{\mathcal H}

\def\ibf{\mathbf i}

\def\la{\lambda}
\def\La{\Lambda}
\def\Lbb{\mathbb L}

\def\Mc{\mathcal{M}}
\def\Mgt{\mathfrak{M}}

\def\Oc{\mathcal O}

\def\Qbb{\mathbb Q}

\def\R{\mathbb R}

\def\Tc{\mathcal T}

\def\Sc{\mathcal S}
\def\SH{\mathbb{SH}}
\def\Spec{\operatorname{Spec}}

\def\Vc{\mathcal V}

\def\wt{\operatorname{wt}}

\def\WGL{\widetilde{GL}}

\def\Xc{\mathcal X}

\def\Z{\mathbb Z}

\pagestyle{plain}
\begin{document}

\title{Cluster structure on genus 2 spherical DAHA: \\
seven-colored flower}
%\date{}
\author{Semeon Arthamonov, Leonid Chekhov, Philippe Di Francesco, Rinat Kedem, Gus Schrader, Alexander Shapiro, Michael Shapiro}
%\email{}

\maketitle

\begin{abstract}
We construct an embedding of the Arthamonov-Shakirov algebra of genus 2 knot operators into the quantized coordinate ring of the cluster Poisson variety of exceptional finite mutation type $X_7$. The embedding is equivariant with respect to the action of the mapping class group of the closed surface of genus 2. The cluster realization of the mapping class group action leads to a formula for the coefficient of each monomial in the genus 2 Macdonald polynomial of type $A_1$ as sum over lattice points in a convex polyhedron in 7-dimensional space.

\bigskip
\bigskip

\leftskip 24.2em
\noindent Seven-colored flower, glide, \\
Cross the skies from side to side. \\
West to east, then south, turn north, \\
Completing circles, forth and forth. \\
Once you kiss the earth, comply, \\
Grant my wish, let dreams fly high.
\vspace{-5pt}
\begin{flushright}
{\scriptsize V. Kataev \\ Translation by ChatGPT}
\end{flushright}

\leftskip 0em

\end{abstract}

\section{Introduction}

The Double Affine Hecke Algebra (DAHA) is an associative $\mathbb{Q}(q,t)$-algebra introduced and studied by I.Cherednik.
It is closely connected with the topology of the once-punctured torus, and the mapping class group $SL(2,\mathbb{Z})$ of the latter acts on DAHA by outer automorphisms.

The DAHA has an important subalgebra called the spherical subalgebra, which in the case of the $A_1$ root system can identified with the algebra generated by the operators
$$
\Oc_A=\frac{tx-t^{-1}x^{-1}}{x-x^{-1}} T_x + \frac{tx^{-1}-t^{-1}x}{x^{-1}-x} T_x^{-1}
\qquad  \text{and} \qquad
\Oc_B=x+x^{-1}
$$
acting on the space of symmetric Laurent polynomials in the variable $x$. The operator $T_x$ acts as a multiplicative shift in the variable $x$, namely, $(T_x f)(x)=f(qx)$. These operators are associated with the $A$- and $B$- cycles respectively on the punctured torus. The operator $\Oc_A$ coincides with the Macdonald difference operator $M$ associated to the root system $A_1$, and its complete set of eigenfunctions $\{P_l\}_{l\in\mathbb{Z}_{\ge0}}$ in the space of symmetric Laurent polynomials are the $A_1$ Macdonald polynomials. On the other hand, the $A_1$ spherical DAHA is naturally embedded into the universally Laurent algebra $\widehat \Lbb^q_{tor}$, which quantizes the coordinate ring of the moduli space of framed $SL_2$-local systems on the punctured torus. In section~\ref{sec:Macdo-gen1} we illustrate the use of cluster structure on $A_1$ spherical DAHA by expressing Macdonald polynomials in term of the Whittaker ones.

In \cite{AS19}, Arthamonov and Shakirov proposed a genus 2 generalization of the $A_1$-spherical DAHA.
More specifically (see Section~\ref{sec:g2-defs}), they found a system of six operators acting on the ring of Laurent polynomials in three variables $(x_{12},x_{13},x_{23})$: three commuting operators $\Oc_{B_{ij}}$ of multiplication by $x_{ij}+x_{ij}^{-1}$, along with three commuting finite difference operators $\mathcal{O}_{A_k}$ which were shown to admit a basis of eigenfunctions $\Phi_{\bs l}$ labelled by certain \emph{admissible triples} $\bs l = (l_1,l_2,l_3)\in\mathbb{Z}^3$. When the parameter $\bs l\in \mathbb{Z}^3$ lies on certain special rays, the genus 2 Macdonald polynomials $\Phi_{\bs l}$ reduce to multiples of their genus 1 counterparts: for example, we have $\Phi_{l,l,0} = c_lP_l(x_{12})$ where $c_l\in\mathbb{Q}(q,t)$ is an explicit $l$-dependent scalar -- again, see Section~\ref{sec:g2-defs} for details. In subsequent work~\cite{CS21}, the topological meaning of the Arthamonov-Shakirov algebra was further clarified: a specialization at $t=q$ was shown to recover the Kaufmann bracket skein algebra of a closed genus 2 surface. In what follows, we denote the latter by $\Sigma_{2,0}$.

In this manuscript we obtain a cluster-algebraic realization of the Arthamonov-Shakirov algebra analogous to the one described above for the spherical DAHA. The role of the Fock-Goncharov moduli space $\widehat\Lbb^q_{tor}$ is played by a 1-parametric deformation of the quantized ring of functions on the Teichm\"uller space for closed genus two Riemann surfaces, which was shown in \cite{CS23} to support a cluster structure of exceptional finite mutation type $X_7$. 

The main idea of our construction is to interpret the generators $\Oc_{B_{ij}} = x_{ij} + x^{-1}_{ij}$ as the eigenvalues of the quantum Teichm\"uller geodesic length operators associated to the pants decomposition of $\Sigma_{2,0}$, obtained by cutting along the three simple closed curves $(B_{12},B_{13},B_{23})$, see~Figure~\ref{fig:genus2curve}. The dual operators $\Oc_{A_i}$ are then recovered by expressing the $A$-cycle geodesic length operators in the basis of eigenvectors for the $B$-cycle ones. 
The key to carrying this out is understanding the local picture of the length operators for all (open and closed) curves in a cylinder containing one of the cutting curves, which we treat in detail in Section~\ref{sec:cylinder}.

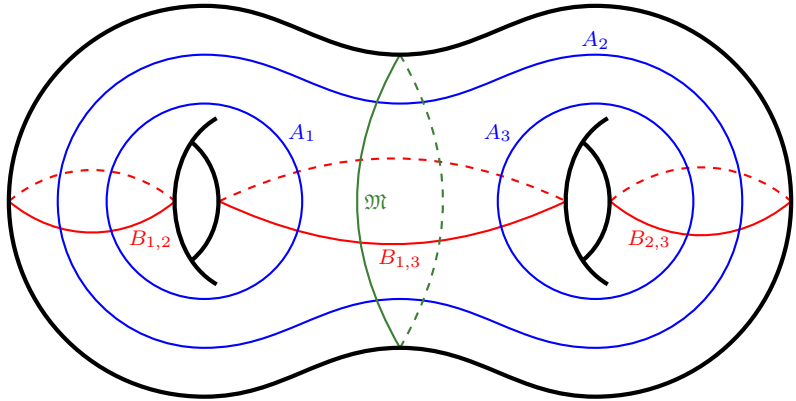
\begin{figure}[b]
\begin{tikzpicture}[every node/.style={inner sep=0.3, minimum size=0.5cm, circle, font=\scriptsize}, thick, x=0.65cm, y=0.65cm]

\draw[red] (-8,0) to[out=-40, in=-140] (-4.6,0);
\draw[red, dashed] (-8,0) to[out=40, in=140] (-4.6,0);

\draw[red] (-3.7,0) to[out=-25, in=-155] (3.4,0);
\draw[red, dashed] (-3.7,0) to[out=25, in=155] (3.4,0);

\draw[red] (4.3,0) to[out=-40, in=-140] (8,0);
\draw[red, dashed] (4.3,0) to[out=40, in=140] (8,0);

\draw[blue] (-4,0) circle (2);
\draw[blue] (4,0) circle (2);

\draw[blue]
(-7,0) to[out=90, in=180] (-4,3) to[out=0, in=180] (0,2) to[out=0, in=180] (4,3) to[out=0, in=90]
(7,0) to[out=-90,in=0] (4,-3) to[out=180,in=0] (0,-2) to[out=180,in=0] (-4,-3) to[out=180,in=-90] (-7,0);

\draw[OliveGreen] (0,3) to[out=-120, in=120] (0,-3);
\draw[OliveGreen, dashed] (0,3) to[out=-60, in=60] (0,-3);

\draw[ultra thick]
(-8,0) to[out=90, in=180] (-4,4) to[out=0, in=180] (0,3) to[out=0, in=180] (4,4) to[out=0, in=90]
(8,0) to[out=-90,in=0] (4,-4) to[out=180,in=0] (0,-3) to[out=180,in=0] (-4,-4) to[out=180,in=-90] (-8,0);

\draw[ultra thick] (-3.75,1.7) to[out=-150, in=150] (-3.75,-1.7);
\draw[ultra thick] (-4.25,1.2) to[out=-40, in=40] (-4.25,-1.2);

\draw[ultra thick] (4.25,1.7) to[out=-150, in=150] (4.25,-1.7);
\draw[ultra thick] (3.75,1.2) to[out=-40, in=40] (3.75,-1.2);

\node[color=red] at (-5.1,-.8) {$B_{1,2}$};
\node[color=red] at (0,-1.2) {$B_{1,3}$};
\node[color=red] at (5.1,-.8) {$B_{2,3}$};

\node[color=blue] at (-2,1.4) {$A_1$};
\node[color=blue] at (4,3.3) {$A_2$};
\node[color=blue] at (2,1.4) {$A_3$};

\node[color=OliveGreen] at (-.5,0) {$\Mgt$};

\end{tikzpicture}
\caption{Surface $\Sigma_{2,0}$ with a separating cycle $\Mgt$ and the non-separating cycles $A_1, A_2, A_3$ and $B_{1,2}, B_{1,3}, B_{2,3}$.}
\label{fig:genus2curve}
\end{figure}

Our main result, Theorem~\ref{thm:g2-embed}, is the construction of a geometrically natural embedding of the Arthamonov-Shakirov algebra into (a cover of) the universally Laurent algebra of type $X_7$. This embedding is equivariant under the action of the mapping class group $\Gamma_{2,0}$ of the surface $\Sigma_{2,0}$. This in turn allows us to obtain our second main result in Theorem~\ref{thm:g2-eigen}: a non-recursive formula for the coefficients of the genus 2 Macdonald polynomials $\Phi_{\bs l}$ as weighted sums over lattice points in certain convex polyhedron in $\mathbb{R}^7$. The cluster realization also allows one to consider an analytic theory of representations of the Arthamonov-Shakirov algebra, which we comment briefly on in Section~\ref{sec:analytic}. Finally, in Section~\ref{sec:class} we relate the quasi-classical limit of our constructions to the main results of~\cite{CS23}.

\subsection*{Acknowledgements}

P.D.F. is supported by the Morris and Gertrude Fine Endowment and the Simons Foundation Grant MP-TSM-00002262. R.K. acknowledges support from the Simons Foundation Grant MP-TSM-0000194. G.S. has been supported by the NSF Standard Grant DMS-2302624. M.S. has been supported by the NSF research grant DMS-2100791. A.S. has been supported by the European Research Council under the European Union’s Horizon 2020 research and innovation programme under grant agreement No 948885 and by the Royal Society University Research Fellowship.

\section{$SL_2$ Macdonald polynomials in genera 1 and 2}

In this section we recall the basics about $SL_2$ Macdonald polynomials, and then present similar statements for their genus 2 analogues, following~\cite{AS19}.

\subsection{Genus 0}
\label{sec:genus-0}
We start by considering an algebra of difference operators which we will later see is naturally associated to curves on a cylinder -- see Section~\ref{sec:cylinder} for the topological explanation of the formulas to follow. For $n\in\mathbb{Z}$, let us define $q$-difference operators $\check H_n$ by
$$
q^{-\frac{n}{2}} \check H_n = \frac{x^{n}}{1-x^2} T_x + \frac{x^{-n}}{1-x^{-2}} T_x^{-1}.
$$
The operators $\check H_n$ act on the space $\Sc_{q,t}$ of $\mathbb{Q}(q,t)$-valued Laurent polynomials in $x$, symmetric under the involution $x \mapsto x^{-1}$. Let us now focus on the \emph{dual Toda Hamiltonian} $\check H_0$. It acts diagonalizably with distinct eigenvalues on the space $\Sc_{q,t}$,  and a basis of eigenvectors is given by the \emph{Whittaker polynomials} $W_l(x) := W_l(x;q^2)$:
\beq
\label{eq:Whit-poly}
W_l(x;q^2) = \sum_{k=0}^l \binom{l}{k}_{q^2} x^{l-2k},
\eeq
with eigenvalues
\beq
\label{eq:Whit-dual-eigen-prop}
\check H_0 W_l(x)  = q^{-l} W_l(x).
\eeq
Here the $q$-binomial coefficient is defined as
$$
\binom{n}{k}_q = \frac{(q;q)_n}{(q;q)_k(q;q)_{n-k}},
$$
where $(X;q)_n$ is the standard notation for the $q$-Pochhammer symbol
$$
(X;q)_n = \prod_{k=0}^{n-1}(1-q^kX).
$$
The \emph{Pieri rule} for the Whittaker polynomials describes the expansion of $(x+x^{-1}) W_l(x)$ in the basis $\hc{W_l}_{l\in\Z}$:
\beq
\label{eq:Whit-Pieri}
\hr{x+x^{-1}} W_l(x) = W_{l+1}(x) + \hr{1-q^{2l}} W_{l-1}(x).
\eeq
Writing $L$ for the operator of multiplication by $x+x^{-1}$, we note that
$$
q^{\pm\frac{1}{2}}\check H_n L - q^{\mp\frac{1}{2}}L\check H_n = (q^{\pm1}-q^{\mp1})\check H_{n\pm1}.
$$
In what follows, we denote by $\SH_{g=0}$ the subalgebra of the ring of difference operators generated by $L$ and $\check H_0$. Note that the algebra $\SH_{g=0}$ carries an action of the mapping class group of a cylinder, which is isomorphic to $\Z$, with the generator $\tau$ acting by translation:
$$
\tau(L) = L \qquad\text{and}\qquad \tau(\check H_n) = \check H_{n+1}.
$$

\subsection{Genus 1}

Recall that the $SL_2$ Macdonald operator is the $q$-difference operator $M$, denoted $\Oc_A$ in the introduction, and defined by
\beq
\label{eq:Macdo-op}
M = \frac{tx-t^{-1}x^{-1}}{x-x^{-1}} T_x + \frac{tx^{-1}-t^{-1}x}{x^{-1}-x} T_x^{-1},
\eeq
where $T_x f(x) = f(qx)$. Let us point out that the Macdonald operator can be written in terms of the operators $\check H_n$ as
\beq
\label{eq:Macdo-dual-Toda}
M = t^{-1} \check H_0 - q^{-1}t \check H_2,
\eeq
and thus the dual Toda Hamiltonian $\check H_0$ is recovered as the Whittaker limit of the Macdonald operator:
$$
\check H_0 = (tM)|_{t=0}.
$$
The action of $M$ on $\Sc_{q,t}$ is diagonalizable, with distinct eigenvalues: an eigenbasis is given by the symmetric $SL_2$ \emph{Macdonald polynomials} $P_l(x) = P_l(x; t^2,q^2)$ as $l$ runs over $\Z_{\ge0}$. 
The corresponding eigenvalues for the finite difference operator $M$ are given  by
$$
M \cdot P_l(x) = \hr{q^{l}t + q^{-l}t^{-1}} P_l(x),
$$
and the polynomials $P_l$ can be expressed in terms of terminating $_2\psi_1$ basic hypergeometric series:
\begin{align*}
P_l(x; t^2,q^2) &= \sum_{r=0}^l \frac{\hr{q^{2l};q^{-2}}_r \hr{t^2;q^2}_r}{\hr{q^{2(l-1)t^2};q^{-2}}_r \hr{q^2;q^2}_r} x^{l-2r}\\
%&=x^{-l}\sum_{r\ge0} \frac{\hr{q^{-2l};q^{2}}_r \hr{t^2;q^2}_r}{\hr{t^{-2}q^{2(1-l)};q^{2}}_r \hr{q^2;q^2}_r}(xqt^{-1})^{2r}\\
&=x^{-l} \cdot {}_2\psi_1\hr{q^{-2l},t^2;q^{2(1-l)}t^{-2};x^2q^2t^{-2} \,\Big|\, q^2},
\end{align*}
where
$$
_2\psi_1(a,b;c;z \,|\, q) = \sum_{n\ge0}\frac{(a;q)_n(b;q)_n}{(c;q)_n(q;q)_n}z^n.
$$
We also recall Heine's $q$-analogue of Gauss' summation formula
$$
_2\psi_1\left(a,b;c;\frac{c}{ab} \,\Big|\, q\right) = \frac{(\frac{c}{a};q)_\infty (\frac{c}{b};q)_\infty}{(c;q)_\infty (\frac{c}{ab};q)_\infty},
$$
whose right hand side terminates in the special case $b = q^{-n}$, $n\in\mathbb{Z}_{\ge0}$, and reduces to the Chu--Vandermonde formula
\beq
\label{eq:gauss-thm}
_2\psi_1\left(a,q^{-n};c;q^n\frac{c}{a} \,\Big|\, q\right) = \frac{(c/a;q)_n}{(c;q)_n}.
\eeq

The Pieri rule for Macdonald polynomials takes the form
\beq
\label{eq:Pieri-1}
\hr{x+x^{-1}}P_l(x) = P_{l+1}(x) + \frac{\big(1-q^{2l}\big)\big(1-q^{2(l-1)}t^4\big)}{\big(1-q^{2l}t^2\big)\big(1-q^{2(l-1)}t^2\big)} P_{l-1}(x).
\eeq
Note that one may instead define the polynomials $P_l(x)$ by fixing the initial conditions $P_l(x) = 0$ for $l<0$, $P_0(x)=1$, and iterating the Pieri rule~\eqref{eq:Pieri-1} to compute $P_l(x)$ for $l>0$.

The $GL_n$ double affine Hecke algebra (DAHA) $\Hbb_{q,t}$ is a quotient of the $\mathbb{Q}(q,t)$-group algebra of the elliptic braid group by certain quadratic Hecke relations. It contains an idempotent $e \in \Hbb_{t,q}$ and a \emph{spherical} subalgebra $\mathbb{SH}_{q,t}=e\Hbb_{q,t}e$. Here we will skip the precise definition of DAHA and instead use the following facts. First, the spherical DAHA admits a faithful representation on $\mathbb{Q}(q,t)[T^W]$, where $T$ and $W$ are respectively the maximal torus and the Weyl group of $GL_n$. The elements of the spherical DAHA act in this representation by finite difference operators,  see~\cite{Che05}. Second, the algebra $\mathbb{SH}_{q,t}$ contains elements $E_v$, labelled by primitive vectors $v \in \Z^2$, and is generated over $\mathbb{Q}(q,t)$ by $E_{(\pm1,0)}$ and $E_{(0,\pm1)}$, see~\cite{SV11,BS12}. In Cherednik's representation the elements $E_{(0,1)}$ and $E_{(1,0)}$ act by the Macdonald operator and the operator of multiplication by the first elementary symmetric function respectively. Third, the algebra $\SH_{q,t}$ carries an action of the mapping class group of a torus, which is isomorphic to $SL(2,\Z)$. An element $g \in SL(2,\Z)$ acts on $\SH_{q,t}$ in such a way that $g \cdot E_v = E_{g \cdot v}$. Analogous constructions of DAHA can be carried out for the groups $G=SL_n,PGL_n$; see~\cite{Che05}  for further details.

In the case  $G=SL_2$ the situation is especially simple: the spherical DAHA is isomorphic to the subalgebra of symmetric $q$-difference operators in a single (invertible) variable $x$ generated by the operator~\eqref{eq:Macdo-op} along with the operator of multiplication by $x+x^{-1}$. In what follows, we denote the $SL_2$ spherical DAHA by $\SH_{g=1}$.

\subsection{Genus 2} 
\label{sec:g2-defs}
Consider the ring $\mathbb{C}(q,t)\hs{x_{12}^{\pm1},x_{13}^{\pm1},x_{23}^{\pm1}}$ of Laurent polynomials in three variables $x_{ij}$ with $i < j$. In~\cite{AS19}, the authors introduced the triple $\Oc_{A_k}$, $k=1,2,3$ of commuting $q$-difference operators on this ring. The operator $\Oc_{A_1}$ is defined to be
$$
\Oc_{A_1} = \sum_{a,b \in \hc{\pm1}} ab \frac{(1-tx_{12}^a x_{13}^b x_{23} ) (1-tx_{12}^a x_{13}^b x_{23}^{-1} )}{tx_{12}^a x_{13}^b  (x_{12}-x_{12}^{-1}) (x_{13}-x_{13}^{-1})} T_{{12}}^a T_{{13}}^b,
$$
and is symmetric under the permutation of the indices 2 and 3. The operators $\Oc_{A_2} ,\Oc_{A_3} $ are obtained by applying permutations of $\hc{1,2,3}$ to the indices in the formula above. The $q$-difference operators $\Oc_{A_k}$ preserve the subring $\Sc^{\otimes 3}_{q,t}$ consisting of Laurent polynomials symmetric under the action of $(\Z/2\Z)^3$ generated by the involutions  $x_{ij} \mapsto x_{ij}^{-1}$ for $1\le i<j\le 3$. Denote by
$$
\Oc_{B_{ij}} = x_{ij} + x_{ij}^{-1}
$$
the operator of multiplication by $x_{ij} + x_{ij}^{-1}$, where $1 \le i < j \le 3$. The \emph{genus 2 spherical DAHA}, which we denote $\SH_{g=2}$, was defined in~\cite{AS19}, as the subalgebra of $q$-difference operators in variables $(x_{12}, x_{13}, x_{23})$, generated by operators $\Oc_{A_k}$ and $\Oc_{B_{ij}}$.

Collecting coefficients in $t$, we can express the operators $\Oc_{A_k}$ in terms of the single variable difference operators $\check H_n^{(ij)}$ acting in the variable $x_{ij}$ as
\beq
\label{eq:collect-coeffs}
\Oc_{A_1} = t^{-1} \check H_0^{(12)} \check H_0^{(13)} - \hr{x_{23}+x_{23}^{-1}} \check H_1^{(12)} \check H_1^{(13)} + t \check H_2^{(12)} \check H_2^{(13)},
\eeq
with the other $\Oc_{A_i}$ obtained by permutation of indices. In particular, we see that the algebra $\mathbb{SH}_{g=2}$ is contained in the tensor product $\mathbb{SH}_{g=0}^{\otimes 3}$ of three copies of the algebra of genus 0 difference operators.

In~\cite{AS19}, the genus 2 Macdonald polynomials $\Phi_{\bs l}(\bs x) = \Phi_{\bs l}(\bs x; t,q)$ were then defined using their Pieri rules. Let us call a triple $\bs l \in \Z^3$ \emph{admissible} if $\bs l \in \Z_{\ge 0}^3$, $\underline{\bs l} \in 2\Z$, and $\bs l$ satisfies the triangle inequalities, i.e.
 $$
l_1\le l_2+l_3, \quad  l_2\le l_1+l_3, \quad l_3\le l_1+l_2.
 $$ 
Fix the initial data $\Phi_{(0,0,0)} = 1$, and $\Phi_{\bs l}(\bs x)=0$ unless $\bs l$ is admissible. Then the remaining $\Phi_{\bs l}(\bs x)$ for admissible triples $\bs l$ are characterized by the \emph{genus 2 Pieri rules}, which are obtained as all index-permutations of the following one for multiplication by $x_{12}+x^{-1}_{12}$ :
$$
\hr{x_{12}+x_{12}^{-1}} \Phi_{\bs l}(\bs x) = \sum_{a,b \in \hc{\pm1}} C_{a,b}(\bs l) V_1^{-a} V_2^{-b} \Phi_{\bs l}(\bs x).
$$
 Here
$$
C_{a,b}(\bs l) = ab \dfrac{\left[\frac{a l_1+b l_2+l_3}{2} ,\frac{a+b+2}{2}\right]_{q,t} \left[\frac{a l_1+b l_2-l_3}{2} ,\frac{a+b}{2}\right]_{q,t} \Big[l_1-1,2\Big]_{q,t} \Big[l_2-1,2\Big]_{q,t}}{\Big[l_1,\frac{a+3}{2}\Big]_{q,t}\Big[l_1-1,\frac{a+3}{2}\Big]_{q,t}\Big[l_2,\frac{b+3}{2}\Big]_{q,t} \Big[l_2-1,\frac{b+3}{2}\Big]_{q,t}}
$$
with
$$
[n,m]_{q,t}=\dfrac{q^nt^{m}-q^{-n}t^{-m}}{q-q^{-1}},
%= \frac{1-q^n t^{2m}}{1-q}q^{\frac{1-n}2}t^{-m}
$$
and the operators $V_k$ act on $\C(q,t)$-valued functions on $\Z^3$ by shifting the argument:
$$
V_k f(\bs l) = f(\bs l - \bs{\delta^k}),
$$
where
$$
\bs{\delta^1} = (1,0,0), \qquad \bs{\delta^2} = (0,1,0), \qquad \bs{\delta^3} = (0,0,1).
$$
As was shown in~\cite{AS19}, the genus 2 Macdonald polynomials are well-defined, non-zero for all admissible triples $\bs l$, and form an eigenbasis for the action of the genus 2 Macdonald difference operators on $\Sc_{q,t}^{\otimes 3}$. The corresponding eigenvalues are 
$$
\Oc_{A_k} \Phi_{\bs l}(\bs x) = \hr{tq^{l_k} + t^{-1}q^{-l_k}} \Phi_{\bs l}(\bs x).
$$
The relation between the genus 1 and genus 2 Macdonald polynomials is given by the following formulas:
$$
\begin{aligned}
\Phi_{l,l,0}(x_{12},x_{13},x_{23}) = c_l P_l(x_{12}), \\
\Phi_{l,0,l}(x_{12},x_{13},x_{23}) = c_l P_l(x_{13}), \\
\Phi_{0,l,l}(x_{12},x_{13},x_{23}) = c_l P_l(x_{23}),
\end{aligned}
$$
where
$$
c_l = P_l(t) = t^{-\frac{l}{2}} \frac{(t^2;q)_l}{(t;q)_l}.
$$

The interpretation of the algebra $\SH_{g=2}$ as a genus 2 analogue of spherical DAHA is further justified by the existence of an action of $\Gamma_{2,0}$ by automorphisms of $\SH_{g=2}$. Let $a_k$ with $1 \le k \le 3$ and $b_{ij}$ with $1 \le i<j \le 3$ be the Dehn twists along the $A$- and $B$-cycles respectively, as shown on Figure~\ref{fig:genus2curve}. Then the group $\Gamma_{2,0}$ is generated by the elements $a_k$, $b_{ij}$ and the following formulas define its action on $\SH_{g=2}$, see~\cite{AS19}:
\beq
\label{eq:twist-a}
a_k^{\pm1}(\Oc_{B_{ij}}) =
\begin{cases}
\pm(q-q^{-1})^{-1}\hr{q^{\frac{1}{2}}\Oc_{B_{ij}}\Oc_{A_k} - q^{-\frac{1}{2}}\Oc_{A_k}\Oc_{B_{ij}}} \quad &k\in\{i,j\}, \\
\Oc_{B_{ij}}\quad &k\notin\{i,j\}, 
\end{cases}
\eeq
\beq
\label{eq:twist-b}
b_{ij}^{\pm1}(\Oc_{A_k}) =
\begin{cases}
\pm(q-q^{-1})^{-1}\hr{q^{\frac{1}{2}}\Oc_{A_k}\Oc_{B_{ij}} - q^{-\frac{1}{2}}\Oc_{B_{ij}}\Oc_{A_k}} \quad &k\in\{i,j\}, \\
\Oc_{A_k}\quad &k\notin\{i,j\}, 
\end{cases}
\eeq
along with
\beq
\label{eq:twists-ab}
a_k^{\pm1}\hr{\Oc_{A_j}} = A_j \qquad\text{and}\qquad b_{ij}^{\pm1}\hr{\Oc_{B_{kl}}} = \Oc_{B_{kl}}.
\eeq
Further justification for the name comes from the subsequent work~\cite{CS21}, where it was shown that the $t=q$ specialization of $\SH_{g=2}$ is isomorphic to the skein algebra of $\Sigma_{2,0}$. In what follows, will exhibit a quantum cluster structure on  $\mathbb{SH}_{g=2}$, whose classical limit recovers the cluster structure on the Teichm\"uller space of closed genus 2 Riemann surfaces discovered in [CS23].

\section{Quantum cluster varieties}
\label{sec:qcv}

In this section we review the definition of quantum cluster varieties. For more details on the subject, we refer the reader to the foundational paper~\cite{FG09}.

\subsection{Cluster $\Xc$-varieties}
In what follows, we will only work with skew-symmetric quantum cluster varieties with integer-valued forms and no frozen variables, which we incorporate into the definition of a seed.

\begin{defn}
A \emph{seed} is a datum $\Theta=\hr{I, \La, (\cdot,\cdot),\hc{e_i}}$ where
\begin{itemize}
\item $I$ is a finite set;
\item $\La$ is a lattice;
\item $(\cdot,\cdot)$ is a skew-symmetric $\Z$-valued form on $\La$;
\item $\hc{e_i \,|\, i \in I}$ is a basis for the lattice $\La$.
\end{itemize}
Note that the data of the last point is equivalent to that of an isomorphism $\bs e \colon \Z^{I} \simeq \La$. In particular, given a pair of seeds $(\Theta,\Theta')$ with the same index set $I$, we get a canonical isomorphism of abelian groups, but not necessarily an isometry of lattices,
$$
\bs{e'} \circ \bs e^{-1} \colon \La \simeq \La'.
$$
\end{defn}

\begin{defn}
 We say that $(\Theta,\Theta')$ are \emph{equivalent} if the isomorphism $\bs{e'} \circ \bs e^{-1} \colon \La \simeq \La'$ is in fact an isometry, that is $(e_i,e_j)_{\La} = (e'_i, e'_j)_{\La'}$ for all $i,j \in I$. We define a \emph{quiver} to be an equivalence class of seeds.
\end{defn}

The quiver $Q$ associated to a seed $\Theta$ can be visualized as a directed graph with vertices labelled by the set $I$ and arrows given by the adjacency matrix $\eps = \hr{\eps_{ij}}$, where $\eps_{ij} = (e_i,e_j)$. If $\Theta,\Theta'$ are two seeds with nondegenerate skew forms representing the same quiver, then we get a canonical lattice isometry  $\bs{e'} \circ \bs e^{-1} \colon \La \simeq \La'$. This guarantees that there is no ambiguity in abusing notation and speaking of \emph{the} data $(\Lambda,(\cdot,\cdot))$ associated to a quiver.

The pair $\hr{\Lambda,(\cdot, \cdot)}$ determines a \emph{quantum torus algebra} $\mathcal{T}_\Lambda^q$, which is defined to be the free $\Z[q^{\pm1}]$-module spanned by $\hc{Y_\la \,|\, \la\in\La}$, with the multiplication defined by
\beq
\label{eq:Y-mult}
q^{(\lambda,\mu)}Y_\lambda Y_\mu = Y_{\lambda+\mu}.
\eeq
A basis $\hc{e_i}$ of the lattice $\La$ gives rise to a distinguished system of generators for $\mathcal{T}_\Lambda^q$, namely the elements $Y_i=Y_{e_i}$. This way we obtain a \emph{quantum cluster $\Xc$-chart}
\beq
\label{eq:qtor-presentation}
\Tc_Q^q = \Z[q^{\pm1}]\ha{Y_i^{\pm1} \,|\, i \in I} / \ha{q^{\eps_{jk}}Y_jY_k = q^{\eps_{kj}}Y_kY_j} \simeq \Tc_\La^q.
\eeq
The generators $Y_i$ are the \emph{quantum cluster $\Xc$-variables}. We note that this presentation of $\Tc_Q^q$ depends only on the quiver and not on the choice of the representative seed. 

Let $\Theta,\Theta'$ be seeds representing quivers $Q,Q'$. We say that the quiver $Q'$ is the \emph{mutation of $Q$ in direction $k\in I$} if the map 
\beq
\label{eq:mon-mut}
\mu_k \colon \Lambda \longra \Lambda', \qquad e_i \longmapsto 
\begin{cases}
-e'_k &\text{if} \; i=k, \\
e'_i + \max\{\eps_{ik},0\}e'_k &\text{if} \; i \ne k
\end{cases}
\eeq
is an isometry. It is easy to see that $Q'=\mu_k(Q)$ if and only if $ Q=\mu_k(Q')$. The \emph{mutation class} of a quiver $Q$, which we denote by the bold symbol $\bs Q$, is the set of all quivers that can be obtained from $Q$ by a finite sequence of mutations. 

To each quiver mutation $\mu_k$ we associate an isomorphism of quantum tori
$$
\mu'_k \colon \Tc_Q^q \longra \Tc_{\mu_k(Q)}^q,
$$
and define the \emph{quantum cluster $\Xc$-mutation}
\beq
\label{eq:q-mutation}
\mu^q_k \colon \Frac(\Tc_{Q}^q) \longra  \Frac(\Tc_{Q'}^q), \qquad f \longmapsto \Psi_q\hr{Y_k'} \mu'_k(f) \Psi_q\hr{Y_k'}^{-1}
\eeq
where $\Frac(\Tc_Q)$ denotes the skew fraction field of the Ore domain $\Tc_Q$, and
$$
\Psi_q(Y) = \frac{1}{(-qY;q^2)_{\infty}} \in \mathbb{Q}(q)[[X]],
$$
is the quantum dilogarithm function. The fact that conjugation by $\Psi_q\hr{Y'_{k}}$ yields a birational automorphism is guaranteed by the integrality of the form~$(\cdot, \cdot)$ and the functional equation
$$
\Psi_q(qY) = (1+Y)\Psi_q(q^{-1}Y).
$$

\begin{defn}
An element of $\Tc^q_Q$ is said to be \emph{universally Laurent} if its image under any finite sequence of quantum cluster mutations is contained in the corresponding quantum torus algebra. The \emph{universally Laurent algebra} $\Lbb^q_{\bs Q}$ is the algebra of universally Laurent elements of $\Tc^q_Q$.
\end{defn}

The collection of quantum charts $\Tc^q_Q$ with $Q \in \bs Q$, together with quantum cluster $\Xc$-mutations is often referred to as the \emph{quantum cluster $\Xc$-variety}. We regard the quantum charts as the quantized algebras of functions on the toric charts in the atlas for the classical cluster Poisson variety. The quantum charts form an $I$-regular tree, and the cluster mutations quantize the gluing data between adjacent charts. The universally Laurent algebra is the quantum analog of the algebra of global functions on the cluster variety. Unless otherwise specified in what follows, we will simply write ``cluster variety'' for quantum cluster $\Xc$-variety --- the same applies to variables, charts, mutations, etc.

The \emph{cluster modular groupoid} associated to a cluster variety is defined as follows. 

\begin{defn}
Let $Q,Q'$ be two quivers with identical label sets $I$.  We define a \emph{permutation morphism} to be a monomial isomorphism of quantum tori $\sigma \colon \Tc_{Q}\rightarrow \Tc_{Q'}$ such that  $\sigma(Y_i)  = Y'_{\sigma(i)}$ for some permutation $\sigma$ of the set $I$. 
\end{defn}

\begin{defn}
Let $Q,Q'$ be two quivers with corresponding quantum tori $\Tc_Q,\Tc_Q'$ as in~\eqref{eq:qtor-presentation}. A \emph{cluster transformation} with source $Q$ and target $Q'$ is a non-commutative birational isomorphism $\Tc_Q\dashrightarrow \Tc_Q'$  which can be factored as a composition of cluster mutations and permutation morphisms.
\end{defn}

\begin{defn}
The \emph{cluster modular groupoid} is the groupoid $\Gc_{\bs Q}$ whose objects are quivers $Q \in \bs Q$, and whose morphisms are cluster transformations. The \emph{cluster modular group}, denoted $\Gamma_{\bs Q}$, is the automorphism group of an object in $\Gc_{\bs Q}$.
\end{defn}
\begin{remark}
Any element of the quasi-cluster modular group restricts to an automorphism of the universally Laurent algebra $\Lbb_{\bs Q}$.
\end{remark}

\subsection{Covers and $\mathcal{A}$-variables}
Suppose that $\Theta$ is a cluster seed with lattice $\Lambda$ and skew form $(\cdot,\cdot)$. We write $\Lambda_{\mathbb{Q}}$ for the vector space $\Lambda\otimes_{\mathbb{Z}}\mathbb{Q}$.
Let us denote by $\Lambda^\vee\subset\Lambda_{\mathbb{Q}}$ the abelian group
$$
\Lambda^\vee = \hc{\tilde\lambda \in \Lambda_\Qbb \,\Big|\, (\mu,\tilde\lambda)\in\mathbb{Z} \quad \forall \mu\in\Lambda}.
$$
It is a lattice if and only if $\det\eps\neq0$. Suppose that $\widetilde\Lambda$ is a lattice such that
$$
\Lambda \subseteq \widetilde\Lambda \subseteq \Lambda^\vee,
$$
and write $D$ for the smallest natural number such that
$$
\Lambda \subset \widetilde \Lambda\subset \frac{1}{D}\Lambda \subset \Lambda_{\mathbb{Q}}.
$$

Fix the primitive  $D$-th root of unity $\zeta_{D}=e^{2\pi i/D}$. We consider the quantum torus algebra $\mathcal{T}_{\widetilde\Lambda}$, which is defined to be the free $\Z[\zeta_{D},q^{\pm{\frac{1}{D}}}]$-module spanned by $\big\{Y_\la \,|\, \la\in\widetilde\La\big\}$, with the multiplication defined by~\eqref{eq:Y-mult}. Here we regard $q^{\frac{1}{D}}$ as a formal indeterminate satisfying $(q^{\frac{1}{D}})^{D}=q$. Since $\widetilde\Lambda \subseteq \Lambda^\vee$, for all $k\in I$ and $\tilde\lambda\in\widetilde\Lambda$ we have that 
$$
(\tilde\lambda,e_k)\in\mathbb{Z}.
$$
Hence the quantum mutation maps~\eqref{eq:q-mutation} extend to well-defined non-commutative birational isomorphisms
$$
\mu^q_k \colon \Frac({\Tc}_{\widetilde\Lambda}^q) \longra  \Frac({\Tc}_{\mu_k(\widetilde\Lambda)}^q),
$$
and we can therefore define an analog of the universally Laurent ring $\Lbb^q({\widetilde\Lambda})\subseteq\mathcal{T}^q_{\widetilde\Lambda}$.

In the case $\det(\eps) \ne 0$, we may take $\widetilde \Lambda=\Lambda^\vee$, which corresponds to the lattice generated by the columns of the $\mathbb{Q}$-matrix $\eps^{-1}$. Let $\{e^\vee_k\}\subset\Lambda^\vee$ be the dual basis to the basis $\{e_k\}$ of $\Lambda$, in the sense that
$$
(e_i,e^\vee_j)=\delta_{ij}.
$$
The elements
$$
Y_{e^{\vee}_k} \in \mathcal{T}_{\Lambda^\vee}, \quad k\in I
$$
are called the \emph{quantum cluster $\mathcal{A}$-variables.} By the quantum Laurent phenomenon~\cite{BZ05}, the quantum $\mathcal{A}$-variables from each cluster are elements of the covering universally Laurent ring
$$
\widehat{\Lbb}^q:=\Lbb^q(\Lambda^\vee).
$$

\section{Quantum cluster varieties from moduli spaces of framed local systems}
In this section, we consider several examples of quantum cluster varieties coming from moduli space of framed $PGL_2$ or $SL_2$ local systems on surfaces with punctures and marked points.

\subsection{Cylinder} 
\label{sec:cylinder}
Let $C$ be the cylinder with one marked point on each boundary component. The moduli space $\mathcal{X}_{C,PGL_2}$ of framed $PGL_2$-local systems on $C$ is cluster Poisson, and its cluster modular groupoid has two objects corresponding to the quivers shown in Figure~\ref{fig:cylinder}. In this case we have $\det\eps=4$, and the dual basis to $e_1,e_2$ is given by
$$
e_1^\vee =-\frac{1}{2}e_2, \quad e_2^\vee =\frac{1}{2}e_1.
$$
We can thus define the covering universally Laurent ring $\widehat \Lbb^q_{cyl}$ associated to the lattice $\Lambda^\vee$. Since $D=2$, $\widehat \Lbb^q_{cyl}$ is an algebra over $\mathbb{Z}[q^{\pm\frac{1}{2}}]$.

\begin{figure}[h]
\begin{tikzpicture}[every node/.style={inner sep=0, minimum size=0.45cm, thick, draw, circle}, thick, x=0.8cm, y=0.8cm]

\node[fill=white] (1) at (-1,0) {\footnotesize{1}};
\node[fill=white] (2) at (1,0) {\footnotesize{2}};

\draw[->] (2.165) to (1.15);
\draw[->] (2.-165) to (1.-15);

\begin{scope}[shift={(5,0)}]
\node[fill=white] (1) at (-1,0) {\footnotesize{1}};
\node[fill=white] (2) at (1,0) {\footnotesize{2}};

\draw[<-] (2.165) to (1.15);
\draw[<-] (2.-165) to (1.-15);
\end{scope}

\end{tikzpicture}
\caption{Quivers $Q_{cyl}$ and $Q'_{cyl}$.}
\label{fig:cylinder}
\end{figure}
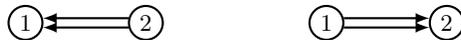

The corresponding cluster algebra has $\mathbb{Z}$-many clusters. We fix a basepoint in this tree given by an \emph{initial cluster} living over the quiver $Q_{cyl}$. The covering universally Laurent ring $\widehat \Lbb^q_{cyl}$ contains the quantum $\Ac$-variables 
$$
Y_{e^\vee_1}=Y_{-\frac12 e_2},\quad Y_{e^\vee_2}= Y_{\frac12 e_1},
$$ 
from this initial cluster, as well as the trace $L$ of the monodromy around the cylinder. The latter can be expressed in cluster coordinates as
\beq
\label{eq:trace-cyl}
L = Y_{-\frac12(e_1+e_2)} + Y_{\frac12(e_2-e_1)} + Y_{\frac12(e_1+e_2)},
\eeq
and by the $GL_n$ case considered in~\cite{SS19}, we have $L \in \widehat \Lbb^q_{cyl}$. 

Let us briefly recall the standard combinatorial recipe used to obtain formula~\eqref{eq:trace-cyl}, see e.g.~\cite[Section 9]{FG06}. Consider the bipartite graph on a cylinder shown in the left part of Figure~\ref{fig:network}. On the right we see the dual quiver with edges directed in such a way that the white vertex of the bipartite graph is on the right as we traverse an edge. Note that upon identifying the pair of nodes with label 2, we recover the quiver $Q_{cyl}$. The direction of edges of the bi-partite graph is additional data, which allows one to express a monodromy matrix $M$ in cluster coordinates. Namely, we set
$$
M_{ij} = \sum_{p : \, j \to i} Y_{\wt(p)}, \qquad\text{where}\qquad \wt(p) = \sum_{f \, \text{below} \, p} e_f.
$$
The first sum in the formula above is taken over all paths $p$ going from $i$-th source to the $j$-th sink in the directed bipartite graph, while the second is taken over all faces lying below the path $p$. Since the monodromy matrix is defined up to conjugation, we shall only consider its trace $L$. Finally, setting $y_0 = -\frac12(y_1+y_2)$, we obtain $\det(L)=1$, and recover formula~\eqref{eq:trace-cyl}.

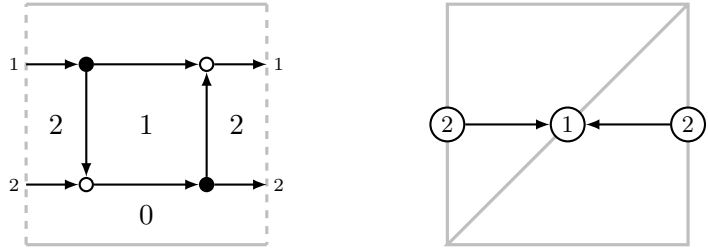
\begin{figure}[h]
\begin{tikzpicture}[every node/.style={inner sep=0, minimum size=0.45cm, thick, draw, circle}, thick, x=0.8cm, y=0.8cm]

\begin{scope}[shift={(7,0)}]
\draw[very thick, gray!50] (-2,-2) to (-2,2) to (2,2) to (2,-2) to (-2,-2) to (2,2);

\node[fill=white] (1) at (0,0) {\footnotesize{1}};
\node[fill=white] (2_1) at (-2,0) {\footnotesize{2}};
\node[fill=white] (2_2) at (2,0) {\footnotesize{2}};

\draw [->] (2_1) -- (1);
\draw [->] (2_2) -- (1);
\end{scope}

\draw[very thick, dashed, gray!50] (-2,-2) to (-2,2);
\draw[very thick, gray!50] (-2,2) to (2,2);
\draw[very thick, dashed, gray!50] (2,2) to (2,-2);
\draw[very thick, gray!50] (2,-2) to (-2,-2);

\node[fill=white, draw=none] at (0,0) {1};
\node[fill=white, draw=none] at (-1.5,0) {2};
\node[fill=white, draw=none] at (1.5,0) {2};
\node[fill=white, draw=none] at (0,-1.5) {0};

\node[fill=white, minimum size=5pt] (p1p1) at (1,1) {};
\node[fill=white, minimum size=5pt] (m1m1) at (-1,-1) {};
\node[fill, minimum size=5pt] (m1p1) at (-1,1) {};
\node[fill, minimum size=5pt] (p1m1) at (1,-1) {};

\draw[->] (-2,1) to (m1p1);
\draw[->] (m1p1) to (p1p1);
\draw[->] (p1p1) to (2,1);
\draw[->] (-2,-1) to (m1m1);
\draw[->] (m1m1) to (p1m1);
\draw[->] (p1m1) to (2,-1);
\draw[->] (m1p1) to (m1m1);
\draw[->] (p1m1) to (p1p1);

\node[fill=white, draw=none, minimum size=0] at (-2.2,1) {\tiny 1};
\node[fill=white, draw=none, minimum size=0] at (-2.2,-1) {\tiny 2};
\node[fill=white, draw=none, minimum size=0] at (2.2,1) {\tiny 1};
\node[fill=white, draw=none, minimum size=0] at (2.2,-1) {\tiny 2};

\end{tikzpicture}
\caption{Directed network and dual cluster quiver.}
\label{fig:network}
\end{figure}

The mapping class group of the cylinder is isomorphic to $\Z$ and is generated by a signle Dehn twist, which we denote $\tau$. It can be realized as a quantum cluster transformation
$$
\tau = (1\,2) \circ \mu_1^q.
$$
The latter is a quantization of the \emph{discrete Toda flow}, see~\cite{HKKR00, Wil15}, also known as the \emph{quantum $Q$-system}, see~\cite{DFK10, DFK11}.\footnote{In~\cite{DFK10, DFK11}, the $q$-Whittaker limit of Macdonald operators is taken at $t \to \infty$ rather than at $t \to 0$, which leads to the discrete time evolution $\tau$ in the present text being inverse of that in \emph{loc.cit.}} Let us put
$$
A_n = \tau^{-n}(Y_{e^{\vee}_2}),
$$
so that we have
$$
A_0 = Y_{e^{\vee}_2}, \quad A_{-1} = Y_{e^{\vee}_1}.
$$
The elements $\{A_n\}_{n\in\mathbb{Z}}$ form the set of all quantum $\mathcal{A}$-variables, and any two adjacent ones $(A_n,A_{n+1})$ form a cluster. Since the Kronecker quiver is acyclic and $\det(\eps) \ne 0$, they generate the universally Laurent algebra $\widehat \Lbb^q_{cyl}$, see~\cite{BZ05}. The element $L$ is invariant under the Dehn twist: $\tau(L) = L$. Indeed, it may be viewed as the ``infinitesimal generator'' of the $Q$-system evolution in the sense that 
\beq
\label{eq:infinitesimal-dehn}
q^{\pm\frac{1}{2}}A_nL - q^{\mp\frac{1}{2}}LA_n = (q^{\pm1}-q^{\mp1})A_{n\pm1}.
\eeq
Thus, the elements $L$ and $A_0$ generate the universally Laurent algebra $\widehat\Lbb^q_{cyl}$ over over the ring
$$
\Z\Big[q^{\pm\frac12},(q-q^{-1})^{-1}\Big].
$$

As discussed in~\cite{DFK18}, the formulas
\beq
\label{eq:cyl-rep}
A_n \longmapsto \ibf q^{-\frac{1}{2}}\check H_n, \qquad L \longmapsto x+x^{-1}
\eeq
define a representation of the algebra $\widehat \Lbb^q_{cyl}$ on the ring $\mathcal{S}_{q,t}$. Let us recover this representation. We start by considering the ring $\Vc$ of compactly supported $\mathbb{C}[q^{\pm1}]$-valued functions on the lattice $\Z$. The space $\Vc$ carries an action of the quantum torus
$$
\Dc_q = \Z[q^{\pm1}]\ha{U,V}/\ha{UV=qVU}
$$
defined by formulas
\beq
\label{eq:repdef}
(U f)(n) = q^n f(n) \qquad\text{and}\qquad (V f)(n) = f(n-1).
\eeq
We embed the covering cluster torus $\mathcal{T}^q({\Lambda^\vee_{Q_{cyl}}})$ into $\Dc_q$ by 
$$
Y_{\frac12e_1} \longmapsto \ibf q^{-\frac{1}{2}}U^{-1}, \qquad Y_{\frac12e_2} \longmapsto -\ibf V^{-1}U.
$$

Although the following Lemma is well known, we include a proof for the reader's convenience.

\begin{lemma}
The representation $\Vc$ of $\Tc^q$ defined by~\eqref{eq:repdef} is faithful.
\end{lemma}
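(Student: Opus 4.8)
The plan is to show directly that the quantum torus $\Dc_q$ acts faithfully on $\Vc$ via the formulas~\eqref{eq:repdef} (this is the representation referred to in the statement; it restricts to a faithful representation of the subalgebra $\Tc^q\subset\Dc_q$, the argument below being unchanged after adjoining $\ibf$ and $q^{\pm\frac12}$ to the coefficient ring).

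I would first fix normal forms on both sides. The relation $UV=qVU$ gives the standard PBW-type basis of $\Dc_q$: every element is a unique finite sum
$$
P=\sum_{b\in\Z}V^bP_b(U),\qquad P_b(U)=\sum_{a\in\Z}c_{a,b}U^a,\quad c_{a,b}\in\Z[q^{\pm1}],
$$
with only finitely many nonzero $c_{a,b}$. On the other side, letting $\delta_m\in\Vc$ denote the indicator function of $\hc{m}$, the family $\hc{\delta_m:m\in\Z}$ is a $\C[q^{\pm1}]$-basis of $\Vc$, and~\eqref{eq:repdef} gives $U\delta_m=q^m\delta_m$ and $V\delta_m=\delta_{m+1}$, hence $V^bU^a\delta_m=q^{am}\delta_{m+b}$.

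Next I would compute the action: $P\delta_m=\sum_bP_b(q^m)\,\delta_{m+b}$. If $P$ annihilates $\Vc$, then $P\delta_m=0$ for every $m\in\Z$; since for fixed $m$ the vectors $\delta_{m+b}$ (as $b$ varies) are pairwise distinct basis elements, this forces $P_b(q^m)=0$ for all $b\in\Z$ and all $m\in\Z$.

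The final step, and the only one not purely computational, is to deduce $P_b\equiv0$ from the vanishing of the Laurent polynomial $P_b(z)=\sum_ac_{a,b}z^a$ along the geometric progression $\hc{q^m:m\in\Z}$. Viewing $P_b$ as a Laurent polynomial in $z$ over the field $\Qbb(q)$ and using that $q$ is transcendental over $\Qbb$ (so the powers $q^m$ are pairwise distinct), $P_b$ would have infinitely many roots unless it is zero; hence $c_{a,b}=0$ for all $a$. Equivalently, one can fix a $b$ with some $c_{a,b}\neq0$, evaluate at as many values of $m$ as there are such indices $a$, and invoke invertibility over $\Qbb(q)$ of the resulting Vandermonde matrix in the distinct entries $q^a$. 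Since $b$ is arbitrary, $P=0$, so the representation is faithful. I do not anticipate any real difficulty; the points requiring care are using the correct normal form for $\Dc_q$ and remembering that, $q$ being a formal variable, the elements $q^m$ are genuinely pairwise distinct.
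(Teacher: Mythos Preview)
Your proposal is correct and follows essentially the same route as the paper: write an arbitrary element of $\Dc_q$ in the PBW basis $V^bU^a$, act on the indicator functions $\delta_m$, read off the coefficient of each $\delta_{m+b}$, and conclude via a Vandermonde/infinitely-many-roots argument using that the powers of the transcendental $q$ are distinct. The paper carries out the Vandermonde step explicitly rather than invoking the infinite-roots formulation, but the substance is identical.
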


\begin{proof}
A general element of the algebra $\Tc^q$ can be written as
$$
a = \sum_{n,m = -N}^N a_{n,m}V^mU^n
$$
for some integer $N \in \Z$ and coefficients $a_{n,m}\in\mathbb{Z}[q^{\pm1}]$. If $a$ acts by zero in $\Vc$ then in particular it annihilates each indicator function $\{\delta_l\}_{l\in\mathbb{Z}}$ defined by
$$
\delta_l(n) =
\begin{cases} 1 &\text{if} \quad n=l, \\
0& \text{otherwise}.
\end{cases}
$$
Since $U\delta_l = q^l\delta_{l}$ and $V\delta_l = \delta_{l+1}$, we see that
$$
a\cdot\delta_l = \sum_{n,m = -N}^N a_{n,m}q^{n(l+m)}\delta_{l+m}.
%&= \sum_{n=-N}^N\sum_{k=l-N}^{N+l}a_{n,k-l}q^{nk}\delta_{k}
$$
So if $a$ acts by zero in $\mathcal{H}$, then for all $k,l\in\mathbb{Z}$ we have
\beq
\label{eq:syseq}
\sum_{n=-N}^Na_{n,k}q^{n(l+k)}=0
\eeq
Now for each $k$ we may regard~\eqref{eq:syseq} as a system of infinitely many equations, one for each $l\in\Z$, in the $2N+1$ variables $a_{n,k}$. Take $2N+1$ of them given by letting $l$ run from $-k$ to $2N-k$. Then the coefficient matrix of the resulting system is
$$
C = \left(q^{(N-i)j}\right)_{0\le i,j\le 2N},
$$
and we have
$$
\det(C) = \prod_{0\le i<j\le 2N}(q^{N-i}-q^{N-j}).
$$
In any ring where $q$ is transcendental, in particular in $\Z[q^{\pm1}]$, this determinant is nonzero, and hence $a_{n,k}=0$ for all $n$. 
This shows that $a=0$, so the representation is faithful. 
\end{proof}

In particular, we get a a faithful representation of the $Q$-system cluster algebra $\widehat \Lbb^q_{cyl}$
$$
\rho_0 \colon \widehat \Lbb^q_{cyl}\longra \mathrm{End}(\mathcal{V}),
$$
whose generators $L,A_0$ act by
\beq
\label{eq:torus-embed-gens}
A_0 \longmapsto \ibf q^{-\frac{1}{2}}U^{-1}, \qquad L \longmapsto V + V^{-1}\hr{1-U^2}.
\eeq
Now let $\Fc \subset \Vc$ be the subring of functions with support in $\Z_{\ge0}$. Note that the action of the operator $V^{-1}$ does not preserve the subspace $\Fc$, so it is not a module over the entire quantum torus $\Dc_q$. Rather, an element $a\in\Dc_q$ gives rise to a linear map $a \colon \Fc \to \Vc$. The same standard argument used to establish faithfulness of the representation $\Vc$ shows that no nonzero element of $\Dc_q$ can annihilate the entire subspace $\Fc$. On the other hand, we observe that the generators $A_0,L$ for $\widehat \Lbb^q_{cyl}$ do in fact preserve the subspace $\Fc$, and $\mathcal{F}$ is therefore an $\widehat \Lbb^q_{cyl}$-submodule in the representation $\mathcal{V}$.

Now we use the Whittaker basis from Section~\ref{sec:genus-0} to identify the vector space $\Fc$ with the ring $\mathcal{S}_{q,t}$:
\beq
\label{eq:whittaker-iso}
\bs W \colon \Fc \longra \mathcal{S}_{q,t}, \qquad \phi \longmapsto \sum_{l\ge 0}\phi(l)W_{l}(x).
\eeq
This equips $\mathcal{S}_{q,t}$ with an $\widehat \Lbb^q_{cyl}$ action via~\eqref{eq:torus-embed-gens}. Comparing these formulas for the action of $A_0,L$ with the $\check H_0$ eigenproperty~\eqref{eq:Whit-dual-eigen-prop} and the Pieri rule~\eqref{eq:Whit-Pieri}, we finally see that the generators $A_0,L$ of $\widehat \Lbb^q_{cyl}$ act on $\mathcal{S}_{q,t}$ via formulas~\eqref{eq:cyl-rep}.

Note that we can use the faithfulness of the two representations to reverse the logic: recalling the algebra $\mathbb{SH}_{g=0}$ of $q$-difference operators in $x$ generated by all $\check H_n$ and $L$, and writing $\Dc_q^{\mathcal{F}}$ for the subalgebra of $\Dc$ preserving $\Fc\subset\Vc,$ we get an injective algebra homomorphism
\beq
\label{eq:iota0}
\eta_0 \colon \mathbb{SH}_{g=0}\longra \Dc^{\mathcal{F}}_q \subset \Dc_q
\eeq
obtained  by identifying both sides with subalgebras of $\mathrm{End}_{\mathbb{C}(q)}(\Fc)$. Furthermore, since the image $\eta_0\hr{\SH_{g=0}} \subset \Dc^{\mathcal{F}}_q$ is contained inside the image $\rho_0\big(\widehat\Lbb^q_{cyl}\big) \subset \Dc^{\mathcal{F}}_q$, and the actions of the mapping class group of the cylinder on both algebras are compatible, we arrive at the following well-known result.

\begin{prop}
There exists a $\Z$-equivariant isomomorphism
$$
\iota \colon \SH_{g=0} \longra \widehat \Lbb^q_{cyl}
$$
defined by inverting the formula~\eqref{eq:cyl-rep}.
\end{prop}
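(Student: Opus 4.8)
The plan is to assemble the isomorphism from the two faithful representations constructed above, namely $\rho_0 \colon \widehat\Lbb^q_{cyl} \to \mathrm{End}(\Vc)$ and the representation of $\SH_{g=0}$ on $\Sc_{q,t} \cong \Fc$. First I would note that the formulas~\eqref{eq:cyl-rep}, together with the faithfulness of the representation on $\Sc_{q,t}$, produce a well-defined injective algebra map $\iota^{-1} \colon \widehat\Lbb^q_{cyl} \to \SH_{g=0}$ sending $A_n \mapsto \ibf q^{-\frac12}\check H_n$ and $L \mapsto x + x^{-1}$: indeed the images of $A_0$ and $L$ lie in $\SH_{g=0}$ by definition of the latter, and the images of all $A_n$ then lie in $\SH_{g=0}$ because of the commutation relation~\eqref{eq:infinitesimal-dehn}, which matches the corresponding relation $q^{\pm\frac12}\check H_n L - q^{\mp\frac12} L \check H_n = (q^{\pm1}-q^{\mp1})\check H_{n\pm1}$ recorded in Section~\ref{sec:genus-0}. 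Injectivity is immediate: $\widehat\Lbb^q_{cyl}$ acts faithfully on $\Vc$ (hence on the submodule $\Fc$), and under $\bs W$ this is identified with a faithful action on $\Sc_{q,t}$, which factors through $\iota^{-1}$.

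Next I would establish surjectivity, i.e. that $\iota^{-1}$ maps $\widehat\Lbb^q_{cyl}$ \emph{onto} $\SH_{g=0}$. Since $\SH_{g=0}$ is by definition generated by $L$ and $\check H_0$, and both are in the image (with $\check H_0 \mapsfrom \ibf q^{\frac12} A_0$, $x+x^{-1} \mapsfrom L$), surjectivity is automatic. Equivalently, one packages this as the injective homomorphism $\eta_0 \colon \SH_{g=0} \to \Dc_q^{\Fc}$ of~\eqref{eq:iota0} having image exactly $\rho_0(\widehat\Lbb^q_{cyl})$: the containment $\eta_0(\SH_{g=0}) \subseteq \rho_0(\widehat\Lbb^q_{cyl})$ holds because the generators $L, \check H_0$ of $\SH_{g=0}$ are hit by $L, A_0$, and the reverse containment holds because $\widehat\Lbb^q_{cyl}$ is generated over $\Z[q^{\pm\frac12},(q-q^{-1})^{-1}]$ by $L$ and $A_0$, as noted after~\eqref{eq:infinitesimal-dehn}. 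Composing $\eta_0$ with the inverse of $\rho_0$ (restricted to its image, which is legitimate since $\rho_0$ is faithful) gives the desired algebra isomorphism $\iota \colon \SH_{g=0} \xrightarrow{\sim} \widehat\Lbb^q_{cyl}$.

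Finally I would check $\Z$-equivariance. The mapping class group of the cylinder is $\Z$, generated by the Dehn twist $\tau = (1\,2)\circ\mu_1^q$, which acts on $\widehat\Lbb^q_{cyl}$ by $\tau(L)=L$ and $\tau(A_n)=A_{n+1}$ (indeed $A_n = \tau^{-n}(Y_{e_2^\vee})$ by construction). On $\SH_{g=0}$ the generator also acts by $\tau(L)=L$, $\tau(\check H_n)=\check H_{n+1}$, as recorded in Section~\ref{sec:genus-0}. Since $\iota$ is determined by $\iota(x+x^{-1})=L$ and $\iota(\check H_n) = \ibf q^{\frac12}A_n$, it visibly intertwines the two $\tau$-actions on generators, hence on the whole algebras. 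The one point requiring mild care — and the closest thing to an obstacle — is the coefficient ring: the representation-theoretic identification takes place over $\C(q,t)$ (or $\C(q)$, as $t$ plays no role here after the Whittaker limit), whereas $\widehat\Lbb^q_{cyl}$ is naturally defined over $\Z[q^{\pm\frac12}]$ and is generated by $L,A_0$ only after inverting $q-q^{-1}$; one should therefore either state the isomorphism over $\C(q)$ or, more carefully, over $\Z[q^{\pm\frac12},(q-q^{-1})^{-1}]$, and verify that the recursion~\eqref{eq:infinitesimal-dehn} together with $\eqref{eq:Whit-Pieri}$–$\eqref{eq:Whit-dual-eigen-prop}$ never forces any further denominators. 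This is routine given the explicit formulas, so no genuine difficulty arises.
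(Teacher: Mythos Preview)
Your argument is correct and follows essentially the same route as the paper: both identify $\SH_{g=0}$ and $\widehat\Lbb^q_{cyl}$ with the same subalgebra of $\Dc_q^{\Fc}$ via the faithful representations $\eta_0$ and $\rho_0$, using that $\widehat\Lbb^q_{cyl}$ is generated by $L,A_0$ and $\SH_{g=0}$ by $L,\check H_0$, and then read off $\Z$-equivariance from the matching $\tau$-actions $A_n\mapsto A_{n+1}$ and $\check H_n\mapsto\check H_{n+1}$. Two small cosmetic points: the implication ``faithful on $\Vc$ hence on the submodule $\Fc$'' is not automatic and relies on the separate observation (made in the paper just before~\eqref{eq:whittaker-iso}) that no nonzero element of $\Dc_q$ annihilates all of $\Fc$; and inverting $\iota^{-1}(A_n)=\ibf q^{-\frac12}\check H_n$ gives $\iota(\check H_n)=-\ibf q^{\frac12}A_n$ rather than $\ibf q^{\frac12}A_n$.
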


For each of the $\mathbb{Z}$-many clusters in the atlas, the lattice isomorphism~\eqref{eq:mon-mut} together with formula~\eqref{eq:torus-embed-gens} determines an embedding of the corresponding quantum torus into $\Dc_q$. Restriction to the universally Laurent ring $\widehat\Lbb_{cyl}$ then defines a new representation of the latter on the space $\mathcal{V}$, which may not be equivalent to the original.

For example, consider the cluster obtained by mutating the initial one at vertex 1. Then the corresponding embedding is
$$
\begin{aligned}
Y_{\frac{1}{2}e_1'} &= Y_{-\frac{1}{2}e_1} & & \hspace{-1em}\longmapsto -\mathbf{i}q^{\frac{1}{2}}U, \\
Y_{\frac{1}{2}e_2'} &= Y_{\frac{1}{2}e_2+e_1} & & \hspace{-1em}\longmapsto \mathbf{i}V^{-1}U^{-1}.
\end{aligned}
$$
The automorphism part of the mutation is the conjugation by $\Psi(Y_{-e_1})$, so that we have
$$
\mu_{1}^q(L) = Y_{-\frac{1}{2}(e'_1+e'_2)}+ Y_{\frac{1}{2}(e'_1-e'_2)}+ Y_{\frac{1}{2}(e'_1+e'_2)}.
$$
Hence in the new representation 
$$
\rho_{-1} \colon \widehat \Lbb^q_{cyl}\longra \mathrm{End}(\mathcal{V}),
$$ the element $L\in \widehat\Lbb_{cyl}$ acts by
\beq
\label{eq:rho-L}
\rho_{-1}(L) = V^{-1} + (1-U^2)V,
\eeq
while we still have
$$
\rho_{-1}(A_0) = \ibf q^{-\frac{1}{2}}U^{-1}.
$$
Since $Y_{e_1'}\mapsto -qU^2$, the mutated counterpart of the embedding~\eqref{eq:iota0} is
\beq
\label{eq:iota1}
\eta_{-1}\colon \SH_{g=0} \longra \Dc_q
\qquad\text{where}\qquad
\eta_{-1} = \Ad_{\Psi_q(-qU^2)}\circ \eta_0.
\eeq
Note that the image of $\eta_{-1}$ is no longer contained in $\Dc_q^\Fc$. For example, we have
\beq
\label{eq:eta-H-check}
\begin{aligned}
\check H_0 &\longmapsto U^{-1}, \\
\check H_1 &\longmapsto q^{\frac12} V^{-1}U^{-1}, \\[2pt]
\check H_2 &\longmapsto q\hr{V^{-2}U^{-1}-U}.
\end{aligned}
\eeq
As noted above, in the new representation $\rho_{-1}$ the algebra $\widehat\Lbb^q_{cyl}$ no longer preserves the subspace $\mathcal{F}\subset\mathcal{V}$, but instead preserves the ideal $\Vc_+ \subset \Vc$ of functions vanishing on $\Z_{\ge0}$. The representations $\Vc_+\subset(\Vc,\rho_{-1})$ and $\mathcal{F}\subset(\Vc,\rho_0)$ of $\widehat\Lbb^q_{cyl}$ are non-isomorphic, as can be seen from the corresponding sets of eigenvalues of the element $A_0$.

Restriction of functions to $\Z_{\ge0}$ defines a short exact sequence of $\widehat\Lbb^q_{cyl}$-modules
$$
0 \longra \Vc_+ \longra (\Vc,\rho_{-1}) \longra \Fc \longra 0.
$$
The two $\widehat\Lbb^q_{cyl}$-module structures on $\mathcal{F}$, one coming as the kernel of $\rho_0$ and the other as the cokernel of $\rho_{-1}$, are isomorphic. Indeed, consider the distribution $\Psi_q^+[n]$ defined by
$$
\Psi_q^+[n]=
\begin{cases}
1/(q^2;q^2)_n, & n\ge0 \\
0, & n<0.
\end{cases}
$$
Since $\Psi_q^+[n]$ satisfies the difference equation 
$$
\Psi_q^+[n-1] = (1-q^{2n})\Psi_q^+[n],
$$
we observe that the multiplication operator
\beq
\label{eq:intertwining}
\mu_{1} \colon (\Vc,\rho_{-1}) \longrightarrow (\Vc,\rho_{0}), \qquad f(n) \longmapsto \Psi_q^+[n] f(n)
\eeq
intertwines the indicated representations of $\widehat\Lbb^q_{cyl}$. Moreover, since $\Psi_q^+$ vanishes on all negative integers, we see that $\mu_{1}$ descends to an isomorphism 
\beq
\label{eq:discrete-mut}
\mu_{1} \colon (\Vc,\rho_{-1})/\Vc_+ \longrightarrow  \Fc \subset (\Vc,\rho_0).
\eeq
On the other hand, the map 
$$
(\Vc,\rho_{0}) \longra (\Vc_+,\rho_{-1}), \qquad f(n) \longmapsto \Psi^{+}_{q^{-1}}[n-1]f(n)
$$
intertwines the $\widehat\Lbb^q_{cyl}$ actions, and its kernel is precisely the submodule $\mathcal{F}\subset(\Vc,\rho_{0}) $. Thus the mutation $\mu_{1}^q$ manifests itself via a pair of short exact sequences of $\widehat\Lbb^q_{cyl}$-modules:
$$
\begin{tikzcd}
0 \arrow[r] & \Vc_+ \arrow[r] & \Vc \arrow[r,"i^*"]  & \Fc \arrow[r] & 0
\end{tikzcd}
$$
and
$$
\begin{tikzcd}
0 \arrow[r] & \Fc \arrow[r, "i_!"] & \Vc \arrow[r] & \Vc_+ \arrow[r] & 0
\end{tikzcd}
$$
where the algebra $\widehat\Lbb^q_{cyl}$ acts via $\rho_{-1}$ on $\Vc$ in the top sequence, and via $\rho_0$ in the bottom one.

\subsection{Punctured torus.}

We now recall the cluster structure on the moduli space of framed $SL_2$ local systems on the punctured torus. A detailed discussion of the $GL_2$ case can be found in~\cite{DFK+24}, and we refer the reader to \emph{loc. \!\!cit.\!} and references therein for further details.

Consider the Markov quiver, see Figure~\ref{fig:Markov}. The corresponding skew-form is degenerate, and its kernel is spanned by the vector $z = e_1+e_2+e_3$. We will work with the lattice
$$
\widetilde\Lambda = \frac{1}{2}\Lambda \subset \Lambda^\vee,
$$
and write $\widehat \Lbb^q_{tor}$ for the corresponding universally Laurent algebra.

\begin{figure}[h]
\begin{tikzpicture}[every node/.style={inner sep=0, minimum size=0.45cm, thick, draw, circle}, thick, x=1cm, y=0.866cm]

\node (1) at (0,0) {\footnotesize{1}};
\node (2) at (-1,2) {\footnotesize{2}};
\node (3) at (1,2) {\footnotesize{3}};

\draw [->] (3.165) -- (2.15);
\draw [->] (1.45) -- (3.-105);
\draw [->] (1.75) -- (3.-135);
\draw [->] (2.-45) -- (1.105);
\draw [->] (2.-75) -- (1.135);
\draw [->] (3.-165) to (2.-15);

\end{tikzpicture}
\caption{Markov quiver $Q$.}
\label{fig:Markov}
\end{figure}
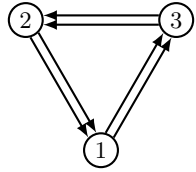

For $(m,n)$ coprime, denote by $L_{(m,n)}$ the quantum trace of the holonomy along the $(m,n)$-curve on the torus. Note that unlike in the $GL_2$ case, here we have $L_{(a,b)} = L_{(-a,-b)}$. Let us choose a basis in the lattice $H_1(T^2\setminus D^2;\Z) \simeq \mathbb{Z}^2$ in such a way that
\beq
\label{eq:L-torus}
\begin{aligned}
L_{(1,0)} &= Y_{-\frac12(e_1+e_2)} + Y_{\frac12(e_2-e_1)} + Y_{\frac12(e_1+e_2)}, \\
L_{(0,1)} &= Y_{-\frac12(e_1+e_3)} + Y_{\frac12(e_1-e_3)} + Y_{\frac12(e_1+e_2)}.
%L_{(1,-1)} &= Y_{-\frac12(e_2+e_3)} + Y_{\frac12(e_3-e_2)} + Y_{\frac12(e_2+e_3)}
\end{aligned}
\eeq

The mapping class group of a punctured torus is isomorphic to $SL(2,\Z)$, and is generated by elements
$$
\sigma_+ =
\begin{pmatrix}
1 & 1 \\
0 & 1
\end{pmatrix}
\qquad\text{and}\qquad
\sigma_- =
\begin{pmatrix}
1 & 0 \\
1 & 1
\end{pmatrix}
$$
which correspond to the Dehn twists of the torus along simple closed curves with homology classes $(1,0)$ and $(0,1)$ respectively. By the construction in~\cite[Section 6]{FG09}, we get a homomorphism $SL(2,\Z) \to \Gamma_{\bs Q}$ defined by
$$
\tau_+^{-1} \longmapsto (1 \, 2) \circ \mu_1^q, \qquad
\tau_- \longmapsto (1\,3) \circ \mu_3^q.
$$
The element $\tau$ of order 6 defined by
$$
\tau = \tau_+^{-1}\tau_- = \begin{pmatrix} 0 & -1 \\ 1 & 1 \end{pmatrix}
$$
is mapped under this homomorphism to a permutation $(3\;2\;1) \in S_3$, and thus the homomorphism $SL(2,\Z) \to \Gamma_{\bs Q}$ factors through $PSL(2,\Z)$. Given an element $g \in PSL(2,\Z)$ we set $L_{g \cdot v} = g \cdot L_v$ for $v \in \Z^2$. This definition makes sense due to the fact that $\tau_+$ preserves $L_{(1,0)}$. It is also easy to check that the definition at hand is compatible with formulas~\eqref{eq:L-torus}. As before, the element $L_{(1,0)}$ lies in the corresponding universally Laurent algebra $\widehat \Lbb^q_{tor}$, and hence so does the element $L_v$ for any primitive vector $v \in \Z^2$.

Consider a quantum torus
$$
\Dc_q[t^{\pm1}] = \Dc_q \otimes_{\Z[q^{\pm1}]} \Z[q^{\pm1},t^{\pm1}].
$$
Then, similarly to the genus 0 case, we obtain an injective homomorphism
$$
\Tc_Q^q \longra \Dc_q[t^{\pm1}],
$$
given by the formulas
$$
Y_{\frac12e_1} \longmapsto \ibf V^{-1}U^{-1}, \qquad
Y_{\frac12e_2} \longmapsto -\ibf q^{\frac12} U, \qquad
Y_{\frac12e_3} \longmapsto -\ibf q^{\frac12} t^{-1}V.
$$
Note that we have
$$
Y_{e_1+e_2+e_3} \longmapsto -qt^{-2},
$$
as well as
\begin{align*}
L_{(1,0)} &\longmapsto V^{-1} + \hr{1-U^2}V, \\
L_{(0,1)} &\longmapsto t^{-1}U^{-1}+ t\hr{U - V^{-2}U^{-1}}.
\end{align*}

Recalling formulas~\eqref{eq:rho-L} and~\eqref{eq:eta-H-check}, we see that
\begin{align*}
V^{-1} + \hr{1-U^2}V &= \eta_{-1}(x+x^{-1}), \\
t^{-1}U^{-1}+ t\hr{U - V^{-2}U^{-1}} &= \eta_{-1}\hr{t^{-1}\check H_0 - q^{-1}t \check H_2}.
\end{align*}
In view of the expression~\eqref{eq:Macdo-dual-Toda} of Macdonald operator in terms of the operators $\check H_n$ and the description of the $SL(2,\Z)$ action on $ \SH_{g=1}$ and $\widehat\Lbb^q_{tor}$, we arrive at the following result. We also refer the reader to~\cite{DFK+24} for more details and the $GL_2$ version of it.

\begin{prop}
There exists an $SL(2,\Z)$-equivariant injective homomorphism
$$
\iota \colon \SH_{g=1} \longrightarrow \widehat\Lbb^q_{tor},
$$
such that
$$
x+x^{-1} \longmapsto L_{(1,0)} \qquad\text{and}\qquad M \longmapsto L_{(0,1)}.
$$
\end{prop}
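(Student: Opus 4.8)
The plan is to assemble the required embedding from the pieces already in place: the faithful representation of $\widehat\Lbb^q_{tor}$ on the quantum torus $\Dc_q[t^{\pm1}]$ coming from the initial Markov cluster, and the injection $\eta_{-1}$ of $\SH_{g=0}$ (hence of $\SH_{g=1}$) into $\Dc_q$. First I would record the explicit formulas displayed just above the statement: under the cluster embedding $\Tc_Q^q \hookrightarrow \Dc_q[t^{\pm1}]$ the trace functions satisfy $L_{(1,0)} \mapsto V^{-1} + (1-U^2)V$ and $L_{(0,1)} \mapsto t^{-1}U^{-1} + t(U - V^{-2}U^{-1})$, and these coincide respectively with $\eta_{-1}(x+x^{-1})$ and, via~\eqref{eq:Macdo-dual-Toda}, with $\eta_{-1}(M)$. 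Since $\SH_{g=1}$ is generated by $x+x^{-1}$ and $M$, and since $\eta_{-1}$ is injective, the assignments $x+x^{-1} \mapsto L_{(1,0)}$, $M \mapsto L_{(0,1)}$ extend to a well-defined injective algebra homomorphism $\iota\colon \SH_{g=1} \to \widehat\Lbb^q_{tor}$: well-definedness because both sides sit inside $\Dc_q[t^{\pm1}]$ and $\iota$ is literally $\rho^{-1}\circ\eta_{-1}$ where $\rho$ is the faithful cluster representation, and injectivity because $\eta_{-1}$ is injective and $\rho$ is faithful. One should also check that the image lands in $\widehat\Lbb^q_{tor}$ rather than merely in the initial quantum torus; this is exactly the statement, recalled in the excerpt, that $L_{(1,0)} \in \widehat\Lbb^q_{tor}$ (and so $L_v$ for every primitive $v$, by applying the mapping class group), so the subalgebra generated by $L_{(1,0)}$ and $L_{(0,1)}$ is universally Laurent.

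The remaining content is the $SL(2,\Z)$-equivariance. Here I would invoke the two parallel descriptions of the mapping class group action that the paper has already set up. On the DAHA side, $\SH_{g=1}$ carries the $SL(2,\Z)$-action with $g\cdot E_v = E_{g\cdot v}$; in the $SL_2$ case this is pinned down by the Dehn twists $\sigma_\pm$ acting on the two generators $E_{(1,0)} = x+x^{-1}$ and $E_{(0,1)} = M$. On the cluster side, the excerpt gives the homomorphism $SL(2,\Z)\to\Gamma_{\bs Q}$ with $\tau_+^{-1}\mapsto (1\,2)\circ\mu_1^q$, $\tau_-\mapsto (1\,3)\circ\mu_3^q$, under which $g\cdot L_v = L_{g\cdot v}$, and this action factors through $PSL(2,\Z)$ consistently with the DAHA side (which also only sees $PSL(2,\Z)$ at the level of the spherical algebra, up to the central element). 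Equivariance of $\iota$ then amounts to the identity $\iota(g\cdot f) = g\cdot\iota(f)$ for $f$ ranging over the generators $x+x^{-1}$, $M$ and $g$ over $\sigma_\pm$: by construction $\iota$ sends the primitive vector $v$ to $L_v$, and both group actions are defined by $v\mapsto g\cdot v$ on the labelling lattice $\Z^2$, so the two sides match generator-by-generator. I would phrase this as: the set $\{E_v\} \subset \SH_{g=1}$ of ``trace-like'' elements maps bijectively onto $\{L_v\}\subset\widehat\Lbb^q_{tor}$ under $\iota$ in a way intertwining the two $v\mapsto g\cdot v$ actions, and since these elements generate, equivariance on all of $\SH_{g=1}$ follows.

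The main obstacle — and the one place requiring genuine care rather than bookkeeping — is verifying that the two $SL(2,\Z)$-actions really do correspond under $\iota$ on the nose, i.e. that the cluster transformation $(1\,2)\circ\mu_1^q$ acts on $L_{(0,1)}$ the way $\sigma_+^{-1}$ acts on $M$ in Cherednik's presentation, and similarly for $\tau_-$. This is not formal: it requires knowing that the discrete Toda / $Q$-system cluster flow reproduces the DAHA automorphism, which is where the genus $0$ analysis (the identification of $\tau$ with the quantum $Q$-system and the intertwiner computations around~\eqref{eq:eta-H-check}) does the real work. Concretely I would check one twist explicitly — say $\tau_-\mapsto(1\,3)\circ\mu_3^q$ sends $L_{(1,0)}$ to $L_{(1,1)}$ and fixes $L_{(0,1)}$, matching $\sigma_-$ — by a direct quantum cluster mutation computation in $\Dc_q[t^{\pm1}]$ using $\mu_3^q = \Ad_{\Psi_q(Y_{e_3'})}\circ\mu_3'$, and then appeal to the relation $\tau = \tau_+^{-1}\tau_-$ of order $6$ (mapping to the $3$-cycle permutation) together with the fact that both actions factor through $PSL(2,\Z)$ to conclude the general case, exactly as the excerpt indicates the compatibility check was carried out for~\eqref{eq:L-torus}. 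Everything else is an assembly of the cylinder results of Section~\ref{sec:cylinder} with the explicit Markov-quiver embedding, and a reference to~\cite{DFK+24} for the $GL_2$ prototype.
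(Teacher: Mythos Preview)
Your proposal is correct and follows essentially the same approach as the paper: the paper simply notes that the displayed identities $L_{(1,0)}\mapsto\eta_{-1}(x+x^{-1})$ and $L_{(0,1)}\mapsto\eta_{-1}(M)$ (via~\eqref{eq:Macdo-dual-Toda}) together with the already-established $SL(2,\Z)$-actions on both sides yield the result, and defers details to~\cite{DFK+24}. Your write-up is in fact more explicit than the paper's---in particular your identification of the one genuine verification (that the cluster Dehn twists reproduce Cherednik's automorphisms on the generators) is exactly the content the paper leaves implicit.
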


\subsection{Closed surface of genus 2.}
\label{sec:g2thmsec}
As was shown in~\cite{CS23}, the quiver $X_7$, see Figure~\ref{fig:X7}, describes a cluster structure on a 1-parameter deformation of the ring of functions on the Teichm\"uller space for $\Sigma_{2,0}$. We denote the corresponding universally Laurent algebra by $\widehat \Lbb^q_{g=2}$. In analogy with the genus 1 case, the universally Laurent ring contains elements corresponding to the  traces of holonomies around the loops $A_j$ and $B_{ij}$. The latter are written in cluster coordinates as
\beq
\label{eq:B-cycles}
\begin{aligned}
L_{B_{12}} &= Y_{-\frac12(e_{5}+e_{6})} + Y_{\frac12(e_{6}-e_{5})} + Y_{\frac12(e_{5}+e_{6})}\\
L_{B_{13}} &= Y_{-\frac12(e_{3}+e_{4})} + Y_{\frac12(e_{4}-e_{3})} + Y_{\frac12(e_{3}+e_{4})}\\
L_{B_{23}} &= Y_{-\frac12(e_{1}+e_{2})} + Y_{\frac12(e_{2}-e_{1})} + Y_{\frac12(e_{1}+e_{2})}.
\end{aligned}
\eeq
Formulas for the former are more cumbersome, and are best described using the cluster realization of the mapping class group.

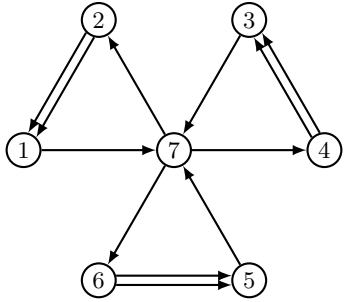
\begin{figure}[h]
\begin{tikzpicture}[every node/.style={inner sep=0, minimum size=0.45cm, thick, draw, circle}, thick, x=1cm, y=0.866cm]

\node (1) at (-2,0) {\footnotesize{1}};
\node (2) at (-1,2) {\footnotesize{2}};
\node (3) at (1,2) {\footnotesize{3}};
\node (4) at (2,0) {\footnotesize{4}};
\node (5) at (1,-2) {\footnotesize{5}};
\node (6) at (-1,-2) {\footnotesize{6}};
\node (7) at (0,0) {\footnotesize{7}};

\draw [->] (7) to (2);
\draw [->] (2.-105) to (1.45);
\draw [->] (2.-135) to (1.75);
\draw [->] (1) to (7);
\draw [->] (7) to (4);
\draw [->] (4.105) to (3.-45);
\draw [->] (4.135) to (3.-75);
\draw [->] (3) to (7);
\draw [->] (7) to (6);
\draw [->] (6.15) to (5.165);
\draw [->] (6.-15) to (5.-165);
\draw [->] (5) to (7);

\end{tikzpicture}
\caption{The quiver $X_7$.}
\label{fig:X7}
\end{figure}

The mapping class group of $\Sigma_{2,0}$ is generated by the Dehn twists along the curves $(A_1,A_2,A_3)$ and $(B_{12},B_{13},B_{23})$. As before, the $B$-cycle Dehn twists are given by
$$
\tau_{B_{12}} = (5\,6) \circ \mu_5^q, \qquad 
\tau_{B_{13}} = (3\,4) \circ \mu_3^q, \qquad 
\tau_{B_{23}} = (1\,2) \circ \mu_1^q.
$$
In~\cite{CS23}, the authors considered the cluster modular group element
\beq
\label{eq:gamma-elt}
\gamma = (1\,2)(3\,4)(5\,6) \circ \mu^q_7 \mu^q_5 \mu^q_3 \mu^q_1 \mu^q_7.
\eeq
It can be computed that for the the semi-classical limits $G_{A_{ij}}$ of $L_{A_{ij}}$ and $G_{B_k}$ of $L_{B_k}$, see section~\ref{sec:class}, one has
$$
\gamma(G_{A_k}) = G_{B_{ij}},
$$
for any permutation $(i,j,k)$  of $(1,2,3)$ with $i<j$. Thus, we define elements $L_{A_k} \in \widehat \Lbb^q_{\bs{X_7}}$ by
$$
L_{A_k} = \gamma^{-1}(L_{B_{ij}}),
$$
and arrive at formulas:
\beq
\label{eq:A-cycles}
\begin{aligned}
L_{A_1} = Y_{e_7+\frac12(e_2+e_1+e_3+e_5)} + L_{B_{23}}Y_{\frac12(e_3+e_5)} + Y_{-e_7-\frac12(e_2+e_1+e_3+e_5)}\hr{1+qY_{e_3}}\hr{1+qY_{e_5}}, \\
L_{A_2} = Y_{e_7+\frac12(e_4+e_1+e_3+e_5)} + L_{B_{13}}Y_{\frac12(e_1+e_5)} + Y_{-e_7-\frac12(e_4+e_1+e_3+e_5)}\hr{1+qY_{e_1}}\hr{1+qY_{e_5}}, \\
L_{A_3} = Y_{e_7+\frac12(e_6+e_1+e_3+e_5)} + L_{B_{12}}Y_{\frac12(e_1+e_3)} + Y_{-e_7-\frac12(e_6+e_1+e_3+e_5)}\hr{1+qY_{e_1}}\hr{1+qY_{e_3}}.
\end{aligned}
\eeq
The same argument used in the $g=0$ and $g=1$ cases shows that the elements $L_{B_{ij}}$ are universally Laurent, and hence so are the $L_{A_k}$. 

Combining formulas~\eqref{eq:infinitesimal-dehn} and~\eqref{eq:collect-coeffs}, we obtain:
\begin{lemma}
\label{eq:lem-dehn-b}
The $B$-cycle Dehn twists preserve the $B$-cycle trace functions and act on the $A$-cycle trace functions by
\beq
\tau_{B_{ij}}^{\pm1}(L_{A_k}) =  \begin{cases} \pm(q-q^{-1})^{-1}\hr{q^{\frac{1}{2}}L_{A_k}L_{B_{ij}} - q^{-\frac{1}{2}}L_{B_{ij}}L_{A_k}} \quad &k\in\{i,j\}\\
L_{A_k}\quad &k\notin\{i,j\}
\end{cases}
\eeq
\end{lemma}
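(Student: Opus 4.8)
The plan is to reduce the claim to the two facts that are already available: first, that the $B$-cycle trace functions $L_{B_{ij}}$ commute with one another and are fixed by all the $B$-cycle Dehn twists; and second, that the relationship between the $A$-cycle length operators and the elementary difference operators $\check H_n^{(ij)}$ is governed by the commutation identity~\eqref{eq:infinitesimal-dehn} in the cylinder surrounding $B_{ij}$ together with the decomposition~\eqref{eq:collect-coeffs}. The statement $\tau_{B_{ij}}(L_{B_{kl}}) = L_{B_{kl}}$ for all $kl$ follows directly from the last line of~\eqref{eq:twists-ab}, or equivalently from the fact that $L$ is $\tau$-invariant in the cylinder (shown in Section~\ref{sec:cylinder}), so there is nothing to do there. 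Likewise the case $k \notin \{i,j\}$ is immediate: the Dehn twist $\tau_{B_{ij}} = (i\,j)\text{-permutation} \circ \mu^q$ acts only on the vertices of the $X_7$ quiver attached to the cylinder around $B_{ij}$, and by the local formula for $L_{A_k}$ in~\eqref{eq:A-cycles} when $k\notin\{i,j\}$ the monodromy around $A_k$ traverses a different pair of cylinders, so $L_{A_k}$ is built from cluster variables untouched by $\tau_{B_{ij}}$ and is therefore fixed.

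The substantive case is $k \in \{i,j\}$. Here I would argue entirely inside the cylinder $\Mgt_{ij}$ containing the curve $B_{ij}$, using the identification of that local picture with the genus-0 data of Section~\ref{sec:cylinder}: the length operator $L_{B_{ij}}$ plays the role of $L$, and the three terms appearing in the collected expression~\eqref{eq:collect-coeffs} for $\Oc_{A_k}$ are, in the $ij$-variable, precisely $\check H_0^{(ij)}, \check H_1^{(ij)}, \check H_2^{(ij)}$ up to the scalar $\ibf q^{-1/2}$ coming from~\eqref{eq:cyl-rep}, i.e.\ up to the normalization $A_n \mapsto \ibf q^{-1/2}\check H_n$. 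Writing $L_{A_k}$ in the form~\eqref{eq:collect-coeffs} as a sum $\sum_{n=0,1,2} c_n\, \check H_n^{(ij)} \cdot (\text{operators in the other two variables})$ and applying the $B$-cycle twist, the only factor that moves is $\check H_n^{(ij)}$, which by the Dehn-twist action in the cylinder satisfies $\tau_{B_{ij}}^{\pm 1}(\check H_n^{(ij)}) = \check H_{n\mp 1}^{(ij)}$ (recall $\tau(\check H_n) = \check H_{n+1}$ in Section~\ref{sec:genus-0}, with the sign convention fixed by the footnote in Section~\ref{sec:cylinder}); equivalently, by~\eqref{eq:infinitesimal-dehn} the operation $X \mapsto \pm(q-q^{-1})^{-1}(q^{1/2} X L_{B_{ij}} - q^{-1/2} L_{B_{ij}} X)$ sends $\check H_n^{(ij)}$ to $\check H_{n\pm 1}^{(ij)}$. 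Applying this term by term to the three summands in~\eqref{eq:collect-coeffs} shifts the superscripts $0,1,2$ uniformly and reproduces exactly the operator obtained by replacing $L_{A_k}$ with $\pm(q-q^{-1})^{-1}(q^{1/2}L_{A_k}L_{B_{ij}} - q^{-1/2}L_{B_{ij}}L_{A_k})$, since $L_{B_{ij}}$ acts only in the $ij$-cylinder and commutes through the operators in the other two variables.

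The main obstacle, and the place where care is needed, is making rigorous the claim that the $B$-cycle Dehn twist acts on $L_{A_k}$ ``purely locally'' in the cylinder $\Mgt_{ij}$ — that is, that one may legitimately isolate the $ij$-variable dependence and apply the genus-0 commutation relation~\eqref{eq:infinitesimal-dehn} inside the larger algebra $\widehat\Lbb^q_{g=2}$. Concretely this requires: (i) checking that in the explicit cluster formulas~\eqref{eq:B-cycles}--\eqref{eq:A-cycles} the Dehn twist $\tau_{B_{ij}}$ only conjugates by quantum dilogarithms of cluster variables supported on the $B_{ij}$-cylinder and only permutes the corresponding pair of vertices; (ii) verifying that the ``transverse'' part of $L_{A_k}$ (the factors $L_{B_{kl}}Y_\bullet$ and the $(1+qY_\bullet)$ terms built from the other cylinders) genuinely commutes with $L_{B_{ij}}$, which follows from the $X_7$ quiver adjacencies; and (iii) confirming the normalization constants and the $q$-powers match on the nose, for which one can either push through the cluster computation directly or — more economically — invoke the faithful representation on $\Sc_{q,t}^{\otimes 3}$, under which $L_{A_k} \mapsto \Oc_{A_k}$ and $L_{B_{ij}} \mapsto \Oc_{B_{ij}}$, and compare with the already-established action~\eqref{eq:twist-b} of $b_{ij}$ on $\SH_{g=2}$, noting that~\eqref{eq:twist-b} has exactly the asserted shape. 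Once the embedding of Theorem~\ref{thm:g2-embed} is known to be $\Gamma_{2,0}$-equivariant, the lemma is in fact a transcription of~\eqref{eq:twist-b}; but since it is used in the course of proving that equivariance, the self-contained route through~\eqref{eq:infinitesimal-dehn} and~\eqref{eq:collect-coeffs} sketched above is the one I would write out.
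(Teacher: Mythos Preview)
Your proposal is correct and follows exactly the route the paper takes: the paper's entire proof is the single clause ``Combining formulas~\eqref{eq:infinitesimal-dehn} and~\eqref{eq:collect-coeffs}, we obtain'', and your argument is precisely the unpacking of that sentence, including the locality observation for the case $k\notin\{i,j\}$ and the term-by-term application of the cylinder identity~\eqref{eq:infinitesimal-dehn} to the three-term decomposition~\eqref{eq:collect-coeffs} for the case $k\in\{i,j\}$. One small slip: the direction of the shift should read $\tau_{B_{ij}}^{\pm1}(\check H_n^{(ij)})=\check H_{n\pm1}^{(ij)}$ rather than $\check H_{n\mp1}^{(ij)}$, consistent with~\eqref{eq:infinitesimal-dehn} and the sign in the lemma; you flagged this as a convention to be checked, and once fixed the rest goes through verbatim.
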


Another useful modular group element is the involution 
\beq
\label{eq:sigma-elt}
\sigma= (1\,5)(3\,7)\circ \mu_7\mu_3\mu_5\mu_1\mu_7\mu_3.
\eeq
A simple calculation shows that for any permutation $(i,j,k)$  of $(1,2,3)$ with $i<j$, the involution $\sigma$ acts on the elements $L_{B_{ij}}$ and $L_{A_k}$ by
$$
\sigma(L_{B_{ij}}) = L_{A_k}, \qquad \sigma(L_{A_k}) = L_{B_{ij}}.
$$
Hence in the cluster obtained from the initial one by applying the element $\sigma$, the very same argument used to  derive the formulas in Lemma~\ref{eq:lem-dehn-b} yields:
\begin{lemma}
\label{eq:lem-dehn-a}
The $A$-cycle Dehn twists preserve the $A$-cycle trace functions and act on the $B$-cycle trace functions by
$$
\tau_{A_k}^{\pm1}(L_{B_{ij}}) =
\begin{cases}
\pm (q-q^{-1})^{-1}\hr{q^{\frac{1}{2}}L_{B_{ij}}L_{A_k} - q^{-\frac{1}{2}}L_{A_k}L_{B_{ij}}} \quad &k\in\{i,j\}\\
L_{B_{ij}}\quad &k\notin\{i,j\}
\end{cases}
$$
\end{lemma}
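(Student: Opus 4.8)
The plan is to reduce Lemma~\ref{eq:lem-dehn-a} to Lemma~\ref{eq:lem-dehn-b} via the modular group element $\sigma$ of~\eqref{eq:sigma-elt}, exactly as the statement's preamble suggests. First I would record the key equivariance property $\sigma(L_{B_{ij}}) = L_{A_k}$ and $\sigma(L_{A_k}) = L_{B_{ij}}$ for each permutation $(i,j,k)$ of $(1,2,3)$ with $i<j$; this is asserted just above the lemma and I would take it as given (its verification is the "simple calculation" referenced there, done in the initial chart by tracking the mutation sequence $\mu_7\mu_3\mu_5\mu_1\mu_7\mu_3$ followed by the permutation $(1\,5)(3\,7)$ applied to the explicit expressions~\eqref{eq:B-cycles} and~\eqref{eq:A-cycles}). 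Next I would establish the conjugation relation between the Dehn twists: since $\sigma$ is an element of the mapping class group realized inside the cluster modular group, and conjugation by a mapping class group element sends the Dehn twist along a curve $c$ to the Dehn twist along $\sigma(c)$, we have $\sigma \circ \tau_{A_k}^{\pm1} \circ \sigma^{-1} = \tau_{B_{ij}}^{\pm1}$ as automorphisms of $\widehat\Lbb^q_{g=2}$ (or its relevant cover), where $(i,j,k)$ is the permutation with $i<j$.

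With these two ingredients the computation is immediate. Applying $\sigma^{-1}$ to both sides of the identity in Lemma~\ref{eq:lem-dehn-b}, using $\sigma^{-1}(L_{A_k}) = L_{B_{ij}}$ and $\sigma^{-1}(L_{B_{ij}}) = L_{A_k}$ (as $\sigma$ is an involution it equals its own inverse on these elements), and using that $\sigma^{-1}$ is an algebra homomorphism so it respects the commutator $q^{1/2}L_{A_k}L_{B_{ij}} - q^{-1/2}L_{B_{ij}}L_{A_k}$, we obtain
$$
\sigma^{-1}\bigl(\tau_{B_{ij}}^{\pm1}(L_{A_k})\bigr) =
\begin{cases}
\pm(q-q^{-1})^{-1}\bigl(q^{\frac12}L_{B_{ij}}L_{A_k} - q^{-\frac12}L_{A_k}L_{B_{ij}}\bigr) & k\in\{i,j\}\\
L_{B_{ij}} & k\notin\{i,j\}.
\end{cases}
$$
On the other hand, $\sigma^{-1}\bigl(\tau_{B_{ij}}^{\pm1}(L_{A_k})\bigr) = \bigl(\sigma^{-1}\tau_{B_{ij}}^{\pm1}\sigma\bigr)\bigl(\sigma^{-1}(L_{A_k})\bigr) = \tau_{A_k}^{\pm1}(L_{B_{ij}})$ by the conjugation relation, and this yields precisely the claimed formula. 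For the case $k\notin\{i,j\}$ one simply has $\tau_{A_k}^{\pm1}(L_{B_{ij}}) = L_{B_{ij}}$ since $\sigma$ swaps this with the corresponding statement that the $B$-twist fixes the disjoint $A$-cycle trace.

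The main obstacle is not the formal reduction, which is routine once $\sigma$ is in hand, but rather justifying the two background facts cleanly: (i) that $\sigma$ genuinely intertwines the $A$- and $B$-data as stated, and (ii) that conjugation of Dehn twists behaves correctly at the level of cluster transformations. For (i) the honest content is a finite (if tedious) tropical/monomial computation of the mutation sequence in~\eqref{eq:sigma-elt} on the seven cluster variables, which the paper defers to a "simple calculation"; I would either cite the analogous computation in~\cite{CS23} or carry it out in the quasi-classical limit where it suffices to track the Poisson-commutative trace functions $G_{A_k}, G_{B_{ij}}$. For (ii), the point is that the map from the mapping class group $\Gamma_{2,0}$ to the cluster modular group $\Gamma_{\bs{X_7}}$ is a group homomorphism, so conjugation relations among Dehn twists in $\Gamma_{2,0}$ are transported faithfully; since $\sigma$ realizes the mapping class that exchanges the pants curves $B_{ij}$ with the dual curves $A_k$, the relation $\sigma \tau_{A_k} \sigma^{-1} = \tau_{B_{ij}}$ holds already in $\Gamma_{2,0}$ and hence in its cluster realization. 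Once these are granted, the lemma follows from Lemma~\ref{eq:lem-dehn-b} with no further calculation.
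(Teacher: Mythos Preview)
Your proposal is correct and follows essentially the same route as the paper: both arguments use the cluster involution $\sigma$ from~\eqref{eq:sigma-elt}, which exchanges $L_{A_k}$ with $L_{B_{ij}}$, to transport Lemma~\ref{eq:lem-dehn-b} into Lemma~\ref{eq:lem-dehn-a}. The paper phrases this as ``pass to the $\sigma$-cluster, where the roles of $A$- and $B$-cycles are exchanged, and rerun the argument for Lemma~\ref{eq:lem-dehn-b} verbatim,'' while you phrase it as the conjugation identity $\sigma\,\tau_{A_k}^{\pm1}\sigma^{-1}=\tau_{B_{ij}}^{\pm1}$ applied to the already-proven formula; these are two packagings of the same computation, and the ingredients you flag as needing justification (the action of $\sigma$ on the trace functions and the compatibility of Dehn-twist conjugation with the cluster realization) are precisely the ones the paper also takes for granted via the ``simple calculation'' and the reference to~\cite{CS23}.
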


To compare the subalgebra of $\widehat \Lbb^q_{X_7}$ generated by the elements $L_{A_k}$ and $L_{B_{ij}}$ with the algebra $\SH_{g=2}$ defined in~\cite{AS19}, let us consider the quantum torus $\Dc_q^{\otimes 3}$ generated over $\Z(q^{\pm1},t^{\pm1})$ by elements $U_{ij},V_{ij} $ for $1\le\ i<j\le 3$ and nontrivial commutation relations
$$
U_{ij}V_{kl} = q^{\delta_{ik}\delta_{jl}}V_{kl}U_{ij}.
$$
It has a representation $\Vc^{\otimes3}$ which is identified with the ring of compactly supported functions of $(j_{12},j_{13},j_{23})\in\mathbb{Z}^3$. The assignments
$$
\begin{aligned}
X_{\frac{1}{2}e_1} &\longmapsto \mathbf{i}V_{23}^{-1}U_{23}^{-1}, &\qquad X_{\frac{1}{2}e_2} &\longmapsto -\mathbf{i}q^{\frac{1}{2}}U_{23}, \\
X_{\frac{1}{2}e_3} &\longmapsto \mathbf{i}V_{13}^{-1}U_{13}^{-1}, & X_{\frac{1}{2}e_4} &\longmapsto -\mathbf{i}q^{\frac{1}{2}}U_{13}, \\
X_{\frac{1}{2}e_5} &\longmapsto \mathbf{i}V_{12}^{-1}U_{12}^{-1}, &X_{\frac{1}{2}e_6} &\longmapsto -\mathbf{i}q^{\frac{1}{2}}U_{12}, \\
\end{aligned}
$$
and
$$
X_{e_7} \longmapsto -qt^{-1}V_{12}V_{13}V_{23}.
$$
define an injective homomorphism
$$
\underline \rho \colon \Lbb^q_{X_7}\hookrightarrow \Dc_q^{\otimes3}.
$$

On the other hand, recall from formula~\eqref{eq:collect-coeffs} that we have
$$
\mathbb{SH}_{g=2}\subset\mathbb{SH}_{g=0}^{\otimes3} \otimes_{\C(q)} \C(q,t).
$$
So we can use the map $\eta_{-1}\colon \mathbb{SH}_{g=0}\longra \Dc_q$ from formula~\eqref{eq:iota1} to define an algebra embedding
$$
\eta_{-1}^{\otimes 3} \colon \mathbb{SH}_{g=0}^{\otimes 3}\longra \Dc_q^{\otimes 3}.
$$

\begin{theorem}
\label{thm:g2-embed}
There exists a $\Gamma_{2,0}$-equivariant injective algebra homomorphism
$$
\mathbb{SH}_2 \longrightarrow \Lbb^q_{X_7},
$$
defined by
$$
\Oc_{A_k}\longmapsto L_{A_k} \qquad\text{and}\qquad \Oc_{B_{ij}} \longmapsto L_{B_{ij}}.
$$
\end{theorem}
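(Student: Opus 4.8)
The plan is to deduce the theorem by comparing two faithful representations of the relevant algebras on the space $\Vc^{\otimes 3}$, exactly as was done in the genus $0$ and genus $1$ cases. First I would use the embedding $\underline\rho\colon \Lbb^q_{X_7}\hookrightarrow \Dc_q^{\otimes 3}$ to compute the images $\underline\rho(L_{B_{ij}})$ and $\underline\rho(L_{A_k})$ explicitly in terms of the operators $U_{ij},V_{ij}$. For the $B$-cycles this is immediate from~\eqref{eq:B-cycles} and the definition of $\underline\rho$: each $L_{B_{ij}}$ maps to an expression of the form $V_{ij}^{-1}+(1-U_{ij}^2)V_{ij}$ in the appropriate copy of $\Dc_q$, which is precisely $\eta_{-1}(x_{ij}+x_{ij}^{-1})$ by~\eqref{eq:rho-L}. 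For the $A$-cycles I would plug the substitution into~\eqref{eq:A-cycles}; here the term $Y_{e_7}\mapsto -qt^{-1}V_{12}V_{13}V_{23}$ is what produces the $t$-dependence, and after simplification one should recognize the result as $\eta_{-1}^{\otimes 3}$ applied to the right-hand side of~\eqref{eq:collect-coeffs}, i.e. to $t^{-1}\check H_0^{(12)}\check H_0^{(13)} - (x_{23}+x_{23}^{-1})\check H_1^{(12)}\check H_1^{(13)} + t\,\check H_2^{(12)}\check H_2^{(13)}$, using the formulas~\eqref{eq:eta-H-check} for the images of $\check H_0,\check H_1,\check H_2$ in each tensor factor. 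The upshot of this computation is the identity of subalgebras of $\Dc_q^{\otimes 3}$:
$$
\underline\rho\bigl(\langle L_{A_k},L_{B_{ij}}\rangle\bigr) = \eta_{-1}^{\otimes 3}\bigl(\SH_{g=2}\bigr),
$$
with $\underline\rho(L_{A_k}) = \eta_{-1}^{\otimes 3}(\Oc_{A_k})$ and $\underline\rho(L_{B_{ij}}) = \eta_{-1}^{\otimes 3}(\Oc_{B_{ij}})$.

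Granting this, the construction of the homomorphism is formal: since $\eta_{-1}^{\otimes 3}$ is injective and $\underline\rho$ is injective, the assignment $\Oc_{A_k}\mapsto L_{A_k}$, $\Oc_{B_{ij}}\mapsto L_{B_{ij}}$ extends to an isomorphism from $\SH_{g=2}$ onto the subalgebra $\langle L_{A_k},L_{B_{ij}}\rangle \subseteq \Lbb^q_{X_7}$ obtained by identifying both with the common subalgebra $\eta_{-1}^{\otimes 3}(\SH_{g=2})\subseteq\Dc_q^{\otimes 3}$; in particular the map $\SH_{g=2}\to\Lbb^q_{X_7}$ is a well-defined injective algebra homomorphism. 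One still has to check that the image lands in the \emph{universally Laurent} subalgebra, but this is already recorded in the excerpt: the $L_{B_{ij}}$ are universally Laurent by the same cylinder/$GL_n$ argument as before, and the $L_{A_k}=\gamma^{-1}(L_{B_{ij}})$ are then universally Laurent because cluster modular group elements preserve $\widehat\Lbb^q$, hence so is the algebra they generate.

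It remains to verify $\Gamma_{2,0}$-equivariance. The group $\Gamma_{2,0}$ is generated by the Dehn twists $a_k,b_{ij}$, whose action on $\SH_{g=2}$ is given by~\eqref{eq:twist-a}--\eqref{eq:twists-ab}, while the corresponding cluster modular group elements $\tau_{A_k},\tau_{B_{ij}}$ act on the $L$-operators by Lemmas~\ref{eq:lem-dehn-a} and~\ref{eq:lem-dehn-b}. Comparing the two sets of formulas, one sees they are literally the same commutator expressions (with $\Oc$ replaced by $L$ throughout), together with the trivial cases $a_k(\Oc_{A_j})=\Oc_{A_j}$, $b_{ij}(\Oc_{B_{kl}})=\Oc_{B_{kl}}$ matching $\tau$'s fixing the corresponding trace functions; so the homomorphism intertwines generators of $\Gamma_{2,0}$ with their cluster realizations, and hence is equivariant.

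The main obstacle I anticipate is the explicit computation in the first paragraph, namely checking that $\underline\rho(L_{A_k})$, after substituting $Y_{e_7}\mapsto -qt^{-1}V_{12}V_{13}V_{23}$ and the half-lattice generators into~\eqref{eq:A-cycles}, really collapses to $\eta_{-1}^{\otimes 3}$ of the three-term expression~\eqref{eq:collect-coeffs}. This requires carefully tracking the $q$-powers coming from the quantum-torus commutation relations $X_\lambda X_\mu = q^{-(\lambda,\mu)}X_{\lambda+\mu}$ in the $X_7$ lattice and matching them against the $q$-powers in~\eqref{eq:eta-H-check}; the factors $(1+qY_{e_j})$ appearing in~\eqref{eq:A-cycles} should be exactly what generates the ``$-U$'' correction term in $\check H_2\mapsto q(V^{-2}U^{-1}-U)$. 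Everything else — injectivity, the universal Laurent property, and equivariance — follows by invoking results already established in the excerpt.
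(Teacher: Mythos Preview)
Your proposal is correct and follows essentially the same approach as the paper's proof: both establish the homomorphism by showing that $\underline\rho(L_{A_k})=\eta_{-1}^{\otimes 3}(\Oc_{A_k})$ and $\underline\rho(L_{B_{ij}})=\eta_{-1}^{\otimes 3}(\Oc_{B_{ij}})$ inside $\Dc_q^{\otimes 3}$ via formulas~\eqref{eq:collect-coeffs} and~\eqref{eq:eta-H-check}, and then deduce equivariance by matching Lemmas~\ref{eq:lem-dehn-b} and~\ref{eq:lem-dehn-a} against~\eqref{eq:twist-a}--\eqref{eq:twists-ab}. You spell out a bit more of the mechanics (the injectivity argument and the universal Laurent property of the image), but the structure is the same.
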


\begin{proof}
The existence of the homomorphism follows immediately from observing that both sides have the same image under their respective embeddings to $\Dc_q^{\otimes 3}$: putting together formulas~\eqref{eq:collect-coeffs} and~\eqref{eq:eta-H-check}, we see that
$$
\eta_{-1}^{\otimes 3}(\Oc_{A_k}) = \underline\rho(L_{A_k}) \qquad\text{and}\qquad \eta_{-1}^{\otimes 3}(\Oc_{B_{ij}}) = \underline\rho(L_{B_{ij}}).
$$
The equivariance under the action of the mapping class group follows from Lemmas~\ref{eq:lem-dehn-b} and~\ref{eq:lem-dehn-a}, which show that the action of the Dehn twist generators by cluster transformations on $\Lbb^q_{X_7}$ is intertwined with the formulas~\eqref{eq:twist-a}, \eqref{eq:twist-b}, and~\eqref{eq:twists-ab} defined in~\cite{AS19}.
\end{proof}

\section{Macdonald polynomials in genera 1 and 2}

In this section we use cluster mutations to express Macdonald operators in genera 1 and 2 via Whittaker polynomials. In particular, this yields an explicit formula for genus 2 Macdonald polynomials.

\subsection{Genus 1 Macdonald polynomials.}
\label{sec:Macdo-gen1}

We now use cluster theory to construct a basis of eigenfunctions for the Macdonald operator~\eqref{eq:Macdo-op}. The strategy is straightforward: the mutation isomorphism~\eqref{eq:discrete-mut} allows us to replace the spectral problem for the operator
$$
\eta_0(M) = t^{-1}U^{-1} + tU - tV^{-2}(U-U^{-1})(q^{-2}U^2-1)
$$
with that for simpler difference operator
\beq
\label{eq:eta-M}
\eta_{-1}(M) = t^{-1}U^{-1}+tU-t V^{-2}U^{-1}.
\eeq
Its eigenfunction equation in $\Fc$ reads
$$
\hr{tq^n + t^{-1}q^{-n}} f_l(n) - q^{-n-2} t f_l(n+2) = \big(tq^l + t^{-1}q^{-l}) f_l(n)
$$
for $l \in \Z_{\ge 0}$, and can be easily solved: the function $f_l(n)$ is zero unless $0 \le n \le l$ and $l-n \in 2\Z$, and takes the following values otherwise:
\beq
\label{eq:f_l-eigen}
f_l(n) = \frac{(q^2;q^2)_l \hr{qt^{-1}}^{n-l}}{ \big(t^2q^{n+l};q^2\big)_{\frac{l-n}{2}} \big(q^{n-l};q^2\big)_{\frac{l-n}{2}}}.
\eeq
The normalization constant $(q^2;q^2)_l$ here will ensure that the resulting Macdonald polynomial is monic. Now it follows from~\eqref{eq:intertwining} that the function
$$
f'_l(n) = \Psi^{+}_{q}[n]f_l(n) \in i_!\mathcal{F}
$$
is an eigenfunction of the operator $\eta_0(M)$ with the same eigenvalues. Hence the eigenfunctions of the Macdonald operator~\eqref{eq:Macdo-op} are given by
$$
p_l(z) = \sum_{n=0}^{\hs{\frac{l}{2}}} f'_l(l-2n)W_{l-2n}(z).
$$
Recalling the formula~\eqref{eq:Whit-poly} for the Whittaker polynomials, we have
$$
p_l(z) = \sum_{n=0}^{\hs{\frac{l}{2}}} \sum_{s=0}^{l-2n} \hr{q^{-1}t}^{2n} \frac{(q^{2(l-2n-s+1)};q^2)_{s+2n}}{\hr{q^{2(l-n)}t^2;q^2}_n \hr{q^{-2n};q^2}_n (q^2;q^2)_s} z^{l-2n-2s}.
$$
Setting $r=n+s$ and splitting three out of four Pochhammer symbols into products of two, we obtain
$$
p_l(z) = \sum_{r=0}^l \sum_{s=0}^l \hr{qt^{-1}}^{2s} \frac{(q^{2(l-m)}t^2;q^2)_s (q^{-2m};q^2)_s}{(q^{2(l-2m+1)};q^2)_s (q^2;q^2)_s} \hr{q^{-1}t}^{2r} \frac{(q^{2(l-2r+1)};q^2)_{2r}}{(q^{2(l-r)}t^2;q^2)_r (q^{-2r};q^2)_r} z^{l-2r}.
$$
The sum over $s$ in the above formula is equal to the ratio
$$
\frac{(q^{2-2r}t^{-2};q^2)_r}{(q^{2(l-2r+1)};q^2)_r}
$$
thanks to identity~\eqref{eq:gauss-thm}. Plugging the latter into the formula for $p_l$ we arrive at
$$
p_l(z) = \sum_{r=0}^l \frac{(q^{2l};q^{-2})_r (t^2;q^2)_r}{(q^{2(l-1)}t^2;q^{-2})_r (q^2;q^2)_r} z^{l-2r},
$$
which coincides with $P_l(z; t^2,q^2)$.

\subsection{Genus 2 Macdonald polynomials.}

We now use the cluster description of the algebra $\SH_{g=2}$ to derive a non-recursive formula for the genus 2 Macdonald polynomials $\Phi_{\bs l}$. The formula computes the coefficient of each monomial appearing in $\Phi_{\bs l}$ as a weighted sum over lattice points in a certain convex polytope in $\mathbb{R}^7$. Its derivation illustrates the principle that knowing the action of the mapping class group by cluster transformations allows us to reduce questions about the Macdonald-type polynomials (associated to $B$-cycles) to the corresponding ones for Whittaker-type polynomials (associated to the $A$-cycles).

Let us spell out our strategy in more detail. Tensor cube of the isomorphism
$$
\bs{\widetilde W} = \bs W \circ \mu_1 \colon \Vc/\Vc_+ \longra \Sc_{q,t},
$$
where $\bs W$ and $\mu_1$ are given by~\eqref{eq:whittaker-iso} and~\eqref{eq:discrete-mut} respectively, intertwines the operators $\eta_{-1}^{\otimes 3}(\Oc_{A_k})$ and $\Oc_{A_k}$. Now recall the mapping class group element $\gamma$ defined by~\eqref{eq:gamma-elt}. The automorphism part of the corresponding composite of cluster mutations consists of conjugation by
$$
\Psi_\gamma = \Psi_q(X_{-e_1-e_3-e_5-2e_7})\Psi_q(X_{-e_5-e_7})\Psi_q(X_{-e_3-e_7})\Psi_q(X_{-e_1-e_7})\Psi_q(X_{-e_7}).
$$
Thus, we shall first find an eigenbasis $g_{\bs l} \in \hr{\Vc/\Vc_+}^{\otimes 3}$ of the operators
$$
\Xi_{A_k} = \Ad_{\Psi_\gamma} \hr{\eta_{-1}^{\otimes 3}(\Oc_{A_k})}.
$$
Then the eigenbasis of the operators $\Oc_{A_k}$ in the space $\Sc_{q,t}^{\otimes 3}$ will be given by
\beq
\label{eq:eigen-OA}
\phi_{\bs l} = \bs{\widetilde W}(\Psi_\gamma^{-1} g_{\bs l}).
\eeq

Before we proceed, let us fix some useful notations. Given a vector $\bs n \in \Z^3$, we set
$$
n_{ij} = \frac{n_i+n_j-n_k}{2}.
$$
We then define the vector $\bs n' \in \Z^3$ by
$$
n'_k = n_{ij}
$$
for $(i,j,k)$ being a permutation of $(1,2,3)$, and note that
$$
n_k = n'_i + n'_j.
$$
Recall that $\hc{\bs{\delta^j}}$ denotes the standard basis in $\Z^3$, and define $\Omega$ to be the $3$-by-$3$ matrix with columns $\big(\bs{\delta^1}\big)',\big(\bs{\delta^2}\big)',\big(\bs{\delta^3}\big)'$, i.e. the matrix such that
$$
\Omega \bs{\delta^k} = \big(\bs{\delta^k}\big)'
$$
for $k=1,2,3$. Recall that a triple of non-negative integers $\bs j$ is \emph{admissible} if
$$
\Omega{\bs j} \in \Z_{\ge0}^3,
$$
which is equivalent to the conditions that $\bs j \in 2\Z$ and components of $\bs j$ satisfy the triangle inequality. Then the linear transformation $\Omega$ identifies $\mathcal{H}^{\otimes 3}$ with the space of all compactly supported functions $g(\bs j)$  on the lattice
$$
\hc{\bs j\in\Z^3 \,\big|\, \underline{\bs j} \in 2\Z}.
$$  

Under the above identification, the operators $\Xi_{A_k}$ take form
$$
\Xi_{A_k} = t^{-1}U_k^{-1}+t U_k-V_k^{-2}U_k^{-1},
$$
which only differs from~\eqref{eq:eta-M} by the absence of the factor $t$ in the third summand. The eigenfunction equations for the operators $\Xi_{A_k}$ on $\Vc/\Vc_+$ then read
$$
\big(tq^{l_k} + t^{-1}q^{-l_k}\big) g_{\bs l}(\bs j) = \left(t q^{j_k} + t^{-1}q^{-j_k} \right) g(\bs j) - q^{-(j_k+2)}g(\bs j + 2\bs{\delta^k}).
$$
%Note that they preserve the subspace of functions vanishing on the non-negative orthant $\mathbb{Z}^3_{\ge0}$, and hence can be regarded as operators on the quotient space $\mathcal{F}^{\otimes 3}$.
The eigenfunctions $g_{\bs l}$ are labelled by admissible triples~$\bs l$, and can be easily computed: they are zero unless $\bs j\in\Z_{\ge0}^3$ and $\frac12(\bs l - \bs j) \in \Z_{\ge0}^3$, and have nonzero values given by\footnote{Note that after dividing the formula~\eqref{eq:easy-eigenfn} by the normalization factor~\eqref{eq:norm-factor} it becomes very similar to a triple product of~\eqref{eq:f_l-eigen}.}
\beq
\label{eq:easy-eigenfn}
g_{\bs l }(\bs j) = 
\Psi_q(-qt^2) (q^2t^{-1})^{\frac12(\underline{\bs j}+\underline{\bs l})} \prod_{k=1}^3(q^{-2l_k};q^2)_{\frac{1}{2}(j_k+l_k)}(t^2;q^2)_{\frac{1}{2}(j_k+l_k)}.
\eeq
%\red{
%\beq
%\label{eq:easy-eigenfn}
%g_{\bs l }(\bs j) = 
%\Psi_q(-qt^2) (q^2t^{-1})^{\frac12(\underline{\bs j} - \underline{\bs l})} (q^2t^2;q^2)_{\frac12\underline{\bs l}}
%\prod_{k=1}^3 \frac{(q^2;q^2)_{l'_k}}{(q^{j_k+l_k}t^2;q^2)_{\frac12(l_k-j_k)} (q^{j_k-l_k};q^2)_{\frac12(l_k-j_k)}}.
%\eeq
%Here we have once again chosen the normalization in such a way that the resulting genus 2 Macdonald operator will be monic.
%}
Our goal now is to compute the eigenfunctions~\eqref{eq:eigen-OA} of the operators  $\Oc_{A_k}$ in $\Sc_{t,q}$, which take the form
$$
\phi_{\bs l}(\bs x) = \sum_{\bs j} (\Psi_\gamma^{-1} g_{\bs l})(\bs j) \widetilde W_{\bs{j'}}(\bs x),
$$
with
$$
\widetilde W_{\bs{j'}}(\bs x) = \frac{W_{\bs{j'}}(\bs x)}{(q^2;q^2)_{\bs{j'}}}.
$$

Applying the first factor $\Psi_q(X_{-e_1-e_3-e_5-2e_7})^{-1}$ of $\Psi_\gamma^{-1}$ to the function $g_{\bs l}$, we obtain
\begin{align*}
g'_{\bs l }(\bs j) = \Psi_q(-q^{\underline{\bs j}+1}t^2)^{-1}g_{\bs l}(\bs j)
= \frac{(q^2t^{-1})^{\frac{1}{2}(\underline{\bs j}+\underline{\bs l})} }{(q^2t^2;q^2)_{\frac{1}{2}\underline{\bs j}}} \prod_{k=1}^3 (q^{-2l_k};q^2)_{\frac{1}{2}(j_k+l_k)}(t^2;q^2)_{\frac{1}{2}(j_k+l_k)}.
\end{align*}
%\red{
%\begin{align*}
%g'_{\bs l }(\bs j) &= \Psi_q(-q^{\underline{\bs j}+1}t^2)^{-1}g_{\bs l}(\bs j)\\
%&= (q^2t^{-1})^{\frac12(\underline{\bs j} - \underline{\bs l})} (q^{\underline{\bs j}+2}t^2;q^2)_{\frac12(\underline{\bs l}-\underline{\bs j})}
%\prod_{k=1}^3 \frac{(q^2;q^2)_{l'_k}}{(q^{j_k+l_k}t^2;q^2)_{\frac12(l_k-j_k)} (q^{j_k-l_k};q^2)_{\frac12(l_k-j_k)}}.
%\end{align*}
%}
Next we apply the product of three commuting operators $\prod_{k=1}^3 \Psi_q(X_{-e_{2k-1}-e_7})^{-1}$. Recalling the Taylor series for the quantum dilogarithm:
$$
\Psi_q^{-1}(x) = \sum_{n\ge0} \frac{q^{-n}}{(q^{-2n};q^2)_n}x^n,
$$
we see that the action of each factor on a compactly supported function $f$ is given by
$$
(\Psi_q^{-1}(X_{-e_{2a-1}-e_7})\cdot f)(\bs j) = \sum_{n\ge 0}\frac{q^{2n j'_a}t^n}{(q^2;q^2)_n} f(\bs j+2n \bs{\delta^a}) .
$$
By the vanishing property of $g_{\bs l}$, we get
\begin{align*}
g_{\bs l}''(\bs j) &=  \sum_{n_a=0}^{\frac{1}{2}(l_a-j_a)} t^{\underline{\bs n}} q^{\underline{\bs n}^2 - \hm{\bs n}^2 + 2 \bs n \cdot \bs{j'}} \prod_{a=1}^3(q^2;q^2)_{n_a}^{-1} g_{\bs l}'(\bs j+2\bs n) \\
&= \hr{q^2t^{-1}}^{\frac12(\underline{\bs j}+\underline{\bs l})} \sum_{n_a=0}^{\frac{1}{2}(l_a-j_a)}
\frac{q^{\underline{\bs n}^2 - \hm{\bs n}^2 + 2 \bs n \cdot \bs{j'}+2\underline{\bs n}}}{(q^2t^2;q^2)_{\frac{1}{2}\underline{\bs j}+\underline{\bs n}}}
\cdot \prod_{a=1}^3\frac{(q^{-2l_a};q^2)_{\frac{1}{2}(j_a+l_a)+n_a}(t^2;q^2)_{\frac{1}{2}(j_a+l_a)+n_a}}{(q^2;q^2)_{n_a}}.
\end{align*}
Then applying the final mutation $\Psi_q(X_{-e_7})^{-1}$, we have
$$
(\Psi_\gamma^{-1}g_{\bs l})(\bs j) = \sum_{s\ge0}(-1)^s \frac{(q^{-2}t)^s}{(q^{-2s};q^2)_s} g_{\bs l}''(\bs j+2s(1,1,1)).
$$
Recalling the formula~\eqref{eq:Whit-poly} for the Whittaker polynomials $W_l$, and collecting coefficients of each monomial, we arrive at
\begin{theorem}
\label{thm:g2-eigen}
For each admissible triple $\bs l$ and a triple of non-negative integers $\bs k \in \Z_{\ge 0}^3$ satisfying $\underline{\bs k}\le  \underline{\bs l}$, consider the convex polytope in the positive orthant of 7-dimensional space
$$
\Delta\hr{\bs k | \bs l} = \{(r_{23},r_{13},r_{12},s,n_1,n_2,n_3)\} \subset \mathbb{Z}_{\ge0}^7
$$
given by the inequalities
$$
2k_{ab} - l_{ab} \le r_{ab} \qquad\text{and}\qquad r_{ab} + r_{ac} \le n_a \le k_a-s.
$$
Define the rational function $C_{\bs l,\bs m}(\bs{r'},s,\bs n) \in\mathbb{K}$ by 
\begin{align*}
C_{\bs l,\bs k}(\bs{r'},s,\bs n) &= \frac{(-1)^s(q^2t^{-1})^{\underline{\bs l}+\underline{\bs r}-\underline{\bs k} +2s }q^{\underline{\bs n}^2 - \underline{\bs r}^2 + 3\hm{\bs r}^2 - \hm{\bs n}^2 - 2\bs r \cdot \bs n+(s+1)(s+2(\underline{\bs n} - \underline{\bs r}))}}{(q^2;q^2)_s(q^2t^2;q^2)_{\underline{\bs n} + \frac{1}{2}\underline{\bs l} -\underline{\bs k} +3s }}\\
&\times \prod_{a=1}^3 q^{2(n_a-r_a)(l'_a-2k'_a)} \frac{(t^2;q^2)_{n_a+l_a-k_a+s}(q^{-2l_a};q^2)_{n_a+l_a-k_a+s}}{(q^2;q^2)_{n_a-r_a}(q^2;q^2)_{r'_a}(q^2;q^2)_{l'_a+r'_a-2k'_a}}.
\end{align*}
Then the polynomial
$$
\phi_{\bs l}(\bs x) = \sum_{\underline{\bs k} \le \underline{\bs l}}\sum_{(\bs{r'},s,\bs{n})\in \Delta(\bs{k}|\bs{l})}C_{\bs{l},\bs{k}}(\bs{r'},s,\bs{n})\prod_{1 \le a<b \le b}x_{ab}^{l_{ab}-2k_{ab}}
$$
is a joint eigenfunction of the difference operators $\mathcal{O}_{A_{k}}$ with eigenvalues $\left(t q^{l_k} + t^{-1}q^{-l_{k}} \right)$.
\end{theorem}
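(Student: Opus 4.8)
The plan is to separate the two assertions --- that $\phi_{\bs l}$ is a joint eigenfunction of the operators $\Oc_{A_k}$ with eigenvalue $tq^{l_k}+t^{-1}q^{-l_k}$, and that it is given by the explicit coefficient formula --- and to observe that the first is essentially free from the constructions already in place. By definition $\Xi_{A_k}=\Ad_{\Psi_\gamma}\hr{\eta_{-1}^{\otimes 3}(\Oc_{A_k})}$, so an eigenvector $g_{\bs l}$ of $\Xi_{A_k}$ yields an eigenvector $\Psi_\gamma^{-1}g_{\bs l}$ of $\eta_{-1}^{\otimes 3}(\Oc_{A_k})$ with the same eigenvalue, and applying the tensor cube of the intertwiner $\bs{\widetilde W}=\bs W\circ\mu_1$, which carries $\eta_{-1}^{\otimes 3}(\Oc_{A_k})$ to the difference operator $\Oc_{A_k}$, turns $\phi_{\bs l}=\bs{\widetilde W}^{\otimes 3}(\Psi_\gamma^{-1}g_{\bs l})$ into an eigenvector of $\Oc_{A_k}$ with that eigenvalue. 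Hence I would only need to verify that the ansatz~\eqref{eq:easy-eigenfn} solves the first-order recursion for $\Xi_{A_k}$ displayed above the statement; this is a one-line substitution using $(q^{-2l_k};q^2)_{m+1}=(1-q^{2m-2l_k})(q^{-2l_k};q^2)_m$ and $(t^2;q^2)_{m+1}=(1-q^{2m}t^2)(t^2;q^2)_m$ with $m=\tfrac12(j_k+l_k)$, and the vanishing of $(q^{-2l_k};q^2)_m$ for $j_k>l_k$, together with the support condition in $\Vc/\Vc_+$, reproduces the claimed support of $g_{\bs l}$. Since each factor of $\Psi_\gamma^{-1}$ shifts its argument upward and $g_{\bs l}$ vanishes above $\bs l$, all the sums that follow are finite, so $\phi_{\bs l}$ is a genuine Laurent polynomial.

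For the coefficient formula I would carry out the four conjugations comprising $\Psi_\gamma^{-1}$ one factor at a time --- this is exactly the computation already displayed before the statement, producing $g'_{\bs l}$ after $\Psi_q(X_{-e_1-e_3-e_5-2e_7})^{-1}$, the triple sum $g''_{\bs l}$ over $\bs n$ after $\prod_{k=1}^3\Psi_q(X_{-e_{2k-1}-e_7})^{-1}$, and the sum over $s$ after $\Psi_q(X_{-e_7})^{-1}$. The remaining step is to substitute the resulting expression for $(\Psi_\gamma^{-1}g_{\bs l})(\bs j)$ into $\phi_{\bs l}(\bs x)=\sum_{\bs j}(\Psi_\gamma^{-1}g_{\bs l})(\bs j)\prod_{a<b}\widetilde W_{j_{ab}}(x_{ab})$, expand each Whittaker polynomial as $\widetilde W_m(x)=\sum_{r}x^{m-2r}/\hr{(q^2;q^2)_r(q^2;q^2)_{m-r}}$ via~\eqref{eq:Whit-poly}, and read off the coefficient of $\prod_{a<b}x_{ab}^{l_{ab}-2k_{ab}}$. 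Matching exponents forces $j_{ab}=l_{ab}-2k_{ab}+2r_{ab}$, equivalently $\bs{j'}=\bs{l'}+2(\bs{r'}-\bs{k'})$; translating back through $j_a=j'_b+j'_c$ gives $\tfrac12(l_a-j_a)=k_a-r_a$, so the $\bs n$-sums inherited from $g''_{\bs l}$ range over $0\le n_a\le k_a-r_a-s$, and after the harmless relabeling $n_a\mapsto n_a+r_a$ --- which exposes the denominator $(q^2;q^2)_{n_a-r_a}$ --- the summation domain becomes $\Delta(\bs k\,|\,\bs l)$. Indeed, inside the positive orthant the three cut-out families $r_{ab}\ge 2k_{ab}-l_{ab}$, $r_{ab}+r_{ac}\le n_a$, $n_a\le k_a-s$ are respectively the non-negativity of the lower index of the Whittaker denominator $(q^2;q^2)_{l'_a+r'_a-2k'_a}$, the non-negativity of the lower index of $(q^2;q^2)_{n_a-r_a}$, and the condition that $(q^{-2l_a};q^2)_{n_a+l_a-k_a+s}$ not vanish, while the remaining Pochhammer lower indices impose nothing further. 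Collecting the accumulated signs, powers of $q$ and $t$, and $q$-Pochhammer symbols from the four conjugations and the three Whittaker expansions then produces the rational function $C_{\bs l,\bs k}(\bs{r'},s,\bs n)$.

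The main obstacle is the index bookkeeping in this last step, where three changes of variable act at once: the involution $\bs n\mapsto\bs n'$ relating the $A$- and $B$-cycle labellings, the diagonal shift by $2s(1,1,1)$ produced by the last mutation $\mu_7$, and the Whittaker summation indices $\bs{r'}$. The delicate point is to check that the accumulated exponent of $q$ telescopes exactly to $\underline{\bs n}^2-\underline{\bs r}^2+3\hm{\bs r}^2-\hm{\bs n}^2-2\bs r\cdot\bs n+(s+1)(s+2(\underline{\bs n}-\underline{\bs r}))$ (together with the per-factor pieces $q^{2(n_a-r_a)(l'_a-2k'_a)}$), with no leftover cross terms, and that the support and positivity conditions really collapse to the three stated families and nothing more. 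In contrast with the genus~1 derivation of Section~\ref{sec:Macdo-gen1}, I would not expect any of the seven sums to collapse through the $q$-Chu--Vandermonde identity~\eqref{eq:gauss-thm}: the full seven-fold sum genuinely survives, matching the seven vertices of $X_7$ and the $3+1+3$ structure of the element $\gamma$. As a consistency check I would specialize to $\bs l=(l,l,0)$ and its index-permutations, where the formula must reduce, up to the scalar $c_l$, to the genus~1 Macdonald polynomial $P_l(x_{12})$ recovered in Section~\ref{sec:Macdo-gen1}.
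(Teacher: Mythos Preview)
Your proposal is correct and follows essentially the same route as the paper: the eigenfunction property is inherited from the construction $\phi_{\bs l}=\bs{\widetilde W}^{\otimes 3}(\Psi_\gamma^{-1}g_{\bs l})$, and the explicit coefficient formula is obtained by applying the factors of $\Psi_\gamma^{-1}$ one at a time to the readily-solved eigenfunctions $g_{\bs l}$, then expanding in the Whittaker basis and reading off monomial coefficients. Your identification of the relabeling $n_a\mapsto n_a+r_a$ and the three families of inequalities cutting out $\Delta(\bs k\,|\,\bs l)$ matches the paper's bookkeeping exactly.
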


When $\bs{k}=0$, the polytope $\Delta(0|\bs{l})$ consists of the single point ${0}\in\mathbb{Z}^7$. Hence we obtain

\begin{cor}
The coefficient $K_{0}(\bs{l})$ of the monomial $x_{23}^{l_{23}}x_{13}^{l_{13}}x_{12}^{l_{12}}$ in $\phi_{\bs l} $ is 
\beq
\label{eq:norm-factor}
K_{0}(\bs{l}) = \frac{t^{-\underline{\bs l}}q^{2\underline{\bs l}}}{(q^2t^2;q^2)_{\frac{1}{2}\underline{\bs l}}}\prod_{a=1}^3\frac{(t^2;q^2)_{l_a}(q^{-2l_a};q^2)_{l_a}}{(q^2;q^2)_{l'_{a}}}.
\eeq
\end{cor}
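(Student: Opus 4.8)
The plan is to deduce the Corollary from Theorem~\ref{thm:g2-eigen} by specializing to $\bs k=0$ and reading off the surviving factor.

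First I would isolate which terms of the expansion
\[
\phi_{\bs l}(\bs x)=\sum_{\underline{\bs k}\le\underline{\bs l}}\ \sum_{(\bs r',s,\bs n)\in\Delta(\bs k|\bs l)}C_{\bs l,\bs k}(\bs r',s,\bs n)\prod_{1\le a<b\le 3}x_{ab}^{l_{ab}-2k_{ab}}
\]
feed into the monomial $x_{23}^{l_{23}}x_{13}^{l_{13}}x_{12}^{l_{12}}$. Each $\bs k$ contributes the single monomial with exponents $l_{ab}-2k_{ab}$, and since the integers $2k_{ab}=k_a+k_b-k_c$ satisfy $k_a=k_{ab}+k_{ac}$ they are determined by, and in turn determine, $\bs k$; in particular they all vanish if and only if $\bs k=0$. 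Thus distinct $\bs k$ label distinct monomials, and the monomial whose exponents are exactly the $l_{ab}$ comes only from $\bs k=0$, so $K_0(\bs l)=\sum_{(\bs r',s,\bs n)\in\Delta(0|\bs l)}C_{\bs l,0}(\bs r',s,\bs n)$.

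Next I would observe, as in the remark preceding the statement, that $\Delta(0|\bs l)$ is the single lattice point $0\in\Z^7$: the defining inequality $n_a\le k_a-s$ becomes $n_a\le-s$, which with $n_a,s\ge0$ forces $s=0$ and $n_a=0$; then $0\le r_{ab}\le r_{ab}+r_{ac}\le n_a=0$ kills every $r_{ab}$, hence $\bs r'=0$. So $K_0(\bs l)=C_{\bs l,0}(0,0,0)$, and it remains only to evaluate the closed formula for $C_{\bs l,\bs k}$ at $\bs k=\bs r'=\bs n=0$, $s=0$ (whence also $\bs r=0$ since $r_a=r'_b+r'_c$, and therefore $\underline{\bs r}=|\bs r|^2=\bs r\cdot\bs n=0$). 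Every $q^2$-Pochhammer symbol of subscript $0$ becomes $1$; the prefactor $(q^2t^{-1})^{\underline{\bs l}+\underline{\bs r}-\underline{\bs k}+2s}$ becomes $q^{2\underline{\bs l}}t^{-\underline{\bs l}}$; the long exponent $\underline{\bs n}^2-\underline{\bs r}^2+3|\bs r|^2-|\bs n|^2-2\bs r\cdot\bs n+(s+1)(s+2(\underline{\bs n}-\underline{\bs r}))$ and each $2(n_a-r_a)(l'_a-2k'_a)$ vanish; the denominator $(q^2t^2;q^2)_{\underline{\bs n}+\frac12\underline{\bs l}-\underline{\bs k}+3s}$ becomes $(q^2t^2;q^2)_{\frac12\underline{\bs l}}$; the numerators $(t^2;q^2)_{n_a+l_a-k_a+s}$ and $(q^{-2l_a};q^2)_{n_a+l_a-k_a+s}$ become $(t^2;q^2)_{l_a}$ and $(q^{-2l_a};q^2)_{l_a}$; and the last denominator $(q^2;q^2)_{l'_a+r'_a-2k'_a}$ becomes $(q^2;q^2)_{l'_a}$. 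Collecting the survivors gives exactly~\eqref{eq:norm-factor}.

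I do not expect a genuine obstacle: the content is bookkeeping. The only points deserving a moment's care are that the Pochhammer arguments $\underline{\bs n}+\tfrac12\underline{\bs l}-\underline{\bs k}+3s$ and $l'_a+r'_a-2k'_a$ specialize to the non-negative integers $\tfrac12\underline{\bs l}$ and $l'_a$, and that the linear relations tying $\bs r$ to $\bs r'$ and $\bs k$ to $\bs k'$ are applied consistently when all coordinates are set to zero; together with the (trivial) admissibility check $\underline{\bs k}=0\le\underline{\bs l}$, this completes the proof.
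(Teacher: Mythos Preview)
Your proof is correct and follows the same approach as the paper: the paper's argument is the single sentence that $\Delta(0|\bs l)$ reduces to the point $0\in\Z^7$, leaving the isolation of the $\bs k=0$ term and the evaluation of $C_{\bs l,0}(0,0,0)$ implicit, while you spell both out carefully. Your additional observation that distinct $\bs k$ yield distinct monomials (so only $\bs k=0$ contributes to the leading monomial) is a useful clarification that the paper does not make explicit.
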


To relate this normalization to the one used in~\cite{AS19}, we need to understand the Pieri rules for the $\phi_{\bs l}$. This too we can easily work out using cluster transformations.

\begin{theorem}
The eigenfunctions $\phi_{\bs l} $ satisfy the Pieri rule
\beq
\label{eq:norm-pieri}
(x_{ij}+x_{ij}^{-1})\phi_{\bs l} = \frac{\hr{1-q^{2l_i}}\hr{1-q^{2l_j}}}{\hr{1-t^2q^{2l_i}}\hr{1-t^2q^{2l_j}}} \sum_{a,b\in \{\pm1\}}\widetilde{A}_{a,b} \phi_{\bs l+a\bs{\delta^i}+b\bs{\delta^j}},
\eeq
where
\begin{align*}
\widetilde{A}_{+,+} &= t^2q^{2(l_i+l_j)}\frac{\left(1-t^2q^{\underline{\bs l} + 2}\right)\left(1-q^{(2(l_{ij}+1))}\right)}{\big(1-q^{2l_i}\big)\big(1-q^{2l_j}\big)\left(1-q^{2(l_i+1)}\right)\left(1-q^{2(l_j+1)}\right)}, \\
\widetilde{A}_{+,-} &=t q^{2(l_i-l'_i+1)} \frac{\big(1-t^2q^{2(l'_i-1)}\big)\big(1-q^{2(l'_j-1)}\big)}{\big(1-q^{2l_i}\big)\big(1-q^{2(l_i+1)}\big)} , \\
\widetilde{A}_{-,+} &= t q^{2(l_j-l'_j+1)} \frac{\big(1-t^2q^{2(l'_j-1)}\big) \big(1-q^{2(l'_i-1)}\big)}{\big(1-q^{2l_j}\big)\big(1-q^{2(l_j+1)}\big)}, \\
\widetilde{A}_{-,-} &= t^{-2}q^{2(2-l_i-l_j)}{\big(1-t^4q^{\underline{\bs l} - 2}\big)\big(1-t^2q^{2(l_{ij}-1)}\big)}.\phantom{\frac{q^{2(l'_j-1)}}{q^{2(l_j+1)}}}
\end{align*}
\end{theorem}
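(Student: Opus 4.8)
The plan is to mirror the derivation of Theorem~\ref{thm:g2-eigen}: I would transport multiplication by $x_{ij}+x_{ij}^{-1}$ back to the space $(\Vc/\Vc_+)^{\otimes 3}$ where the simple eigenfunctions $g_{\bs l}$ of~\eqref{eq:easy-eigenfn} live, and carry out the expansion there. First I would pin down the pulled-back operator. Rescaling the Whittaker Pieri rule~\eqref{eq:Whit-Pieri} to the monic polynomials $\widetilde W_m=W_m/(q^2;q^2)_m$ gives $(x+x^{-1})\widetilde W_m=(1-q^{2(m+1)})\widetilde W_{m+1}+\widetilde W_{m-1}$, so under the isomorphism $\bs{\widetilde W}=\bs W\circ\mu_1$ of~\eqref{eq:whittaker-iso}--\eqref{eq:discrete-mut} multiplication by $x+x^{-1}$ is the operator $V^{-1}+(1-U^2)V$, in agreement with $\rho_{-1}(L)$ from~\eqref{eq:rho-L}. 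Tensoring over the three slots and applying the change of variables $\Omega$ from Theorem~\ref{thm:g2-eigen} --- under which the shift of the Whittaker index in the slot dual to $(i,j)$ becomes $\bs j\mapsto\bs j\pm(\bs{\delta^i}+\bs{\delta^j})$ and the associated multiplication factor becomes $q^{j_{ij}}$ --- multiplication by $x_{ij}+x_{ij}^{-1}$ becomes the two-term difference operator
$$
\mathcal B^{0}_{ij}\,g(\bs j)\;=\;g\hr{\bs j+\bs{\delta^i}+\bs{\delta^j}}+\hr{1-q^{2j_{ij}}}\,g\hr{\bs j-\bs{\delta^i}-\bs{\delta^j}}.
$$

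Next, since $\phi_{\bs l}=\bs{\widetilde W}^{\otimes 3}(\Psi_\gamma^{-1}g_{\bs l})$, I would use $(x_{ij}+x_{ij}^{-1})\phi_{\bs l}=\bs{\widetilde W}^{\otimes 3}\hr{\mathcal B^{0}_{ij}\,\Psi_\gamma^{-1}g_{\bs l}}$ and apply the two-term operator $\mathcal B^{0}_{ij}$ to the function $\Psi_\gamma^{-1}g_{\bs l}$ already computed in the proof of Theorem~\ref{thm:g2-eigen}, re-expanding the result in the $\Psi_\gamma^{-1}g_{\bs l+a\bs{\delta^i}+b\bs{\delta^j}}$. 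Equivalently, one can commute $\mathcal B^{0}_{ij}$ in turn through the five quantum-dilogarithm factors of $\Psi_\gamma$ using the functional equation $\Psi_q(qY)=(1+Y)\Psi_q(q^{-1}Y)$ --- each such conjugation multiplies the two monomial terms by finite products of linear factors $(1+q^{c}Y)^{\pm1}$, which is where the denominators of the $\widetilde A_{a,b}$ arise --- and then apply $\Ad_{\Psi_\gamma}(\mathcal B^{0}_{ij})$ to the simple eigenfunctions~\eqref{eq:easy-eigenfn}. In either scheme the output is a priori a longer sum over $\bs j$, but, exactly as the internal sums in Section~\ref{sec:Macdo-gen1} and in Theorem~\ref{thm:g2-eigen} collapsed via the $q$-binomial theorem and the Chu--Vandermonde identity~\eqref{eq:gauss-thm}, the telescoping here should leave precisely the four functions $\Psi_\gamma^{-1}g_{\bs l+a\bs{\delta^i}+b\bs{\delta^j}}$ with $a,b\in\hc{\pm1}$ and scalar coefficients; factoring out the common $\tfrac{(1-q^{2l_i})(1-q^{2l_j})}{(1-t^2q^{2l_i})(1-t^2q^{2l_j})}$ leaves the stated $\widetilde A_{a,b}$. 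That only these four indices can occur can also be seen in advance from Lemma~\ref{eq:lem-dehn-a}: the twist $\tau_{A_k}$ with $k\notin\hc{i,j}$ fixes $L_{B_{ij}}$ and commutes with all $\Oc_{A_m}$, hence acts diagonally on the $\phi_{\bs l}$ and forces $\Oc_{B_{ij}}$ to preserve $l_k$, while within a fixed $\Oc_{A_k}$-eigenspace the pairs $\Oc_{A_i},\Oc_{B_{ij}}$ and $\Oc_{A_j},\Oc_{B_{ij}}$ behave as in genus~1 and change $l_i$ resp. $l_j$ by $\pm1$.

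The hard part is this middle step: tracking the five-fold dilogarithm conjugation together with the ensuing $q$-Pochhammer resummations, and verifying that every contribution outside the four target indices cancels. Because the coefficients $\widetilde A_{a,b}$ are already written out, I would organize the computation as a verification rather than a discovery: expand both $\mathcal B^{0}_{ij}(\Psi_\gamma^{-1}g_{\bs l})$ and $\tfrac{(1-q^{2l_i})(1-q^{2l_j})}{(1-t^2q^{2l_i})(1-t^2q^{2l_j})}\sum_{a,b\in\hc{\pm1}}\widetilde A_{a,b}\,\Psi_\gamma^{-1}g_{\bs l+a\bs{\delta^i}+b\bs{\delta^j}}$ as explicit functions of $\bs j$ and match them, which reduces each of the four identities to a single terminating ${}_2\psi_1$ summation of Chu--Vandermonde type. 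Two checks keep the bookkeeping honest: on the rays $\bs l=(l,l,0)$ and its permutations, where only $a=b$ contributes, the identity must degenerate to the genus~1 Macdonald Pieri rule~\eqref{eq:Pieri-1} for $P_l$; and $\widetilde A_{+,+}$ must reproduce the leading-coefficient recursion $K_0(\bs l+\bs{\delta^i}+\bs{\delta^j})=\tfrac{(1-q^{2l_i})(1-q^{2l_j})}{(1-t^2q^{2l_i})(1-t^2q^{2l_j})}\,\widetilde A_{+,+}\,K_0(\bs l)$ implied by~\eqref{eq:norm-factor}.
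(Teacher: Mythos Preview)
Your approach is essentially the same as the paper's: transport $\Oc_{B_{ij}}$ back through $\bs{\widetilde W}^{\otimes3}$ to the operator $V_{ij}^{-1}+(1-U_{ij}^2)V_{ij}$ (your $\mathcal B^0_{ij}$), then either apply it to $\Psi_\gamma^{-1}g_{\bs l}$ directly or conjugate it through $\Psi_\gamma$ and apply the result to the simple eigenfunctions $g_{\bs l}$. The paper carries out the second option explicitly: the conjugated operator $Z_{23}=\Ad_{\Psi_\gamma}\!\big(V_{23}^{-1}+(1-U_{23}^2)V_{23}\big)$ turns out to be a \emph{finite} Laurent polynomial in the $U$'s and $V$'s, so the identity on $g_{\bs l}$ is a five-term relation (four shifts $\bs j\pm\bs{\delta^2}\pm\bs{\delta^3}$ plus one ``cross'' shift $\bs j+2\bs{\delta^1}-\bs{\delta^2}+\bs{\delta^3}$) against the four right-hand shifts in $\bs l$, and this is checked with nothing more than the functional equation $(X;q^2)_{n+1}=(1-q^{2n}X)(X;q^2)_n$. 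In particular your anticipated collapse via Chu--Vandermonde~/~${}_2\psi_1$ is an overestimate --- after the conjugation no infinite or nested sum survives, only a finite Pochhammer identity. Your a priori argument from Lemma~\ref{eq:lem-dehn-a} for why only the four indices $\bs l+a\bs{\delta^i}+b\bs{\delta^j}$ can occur, and the degeneration/leading-coefficient sanity checks, are nice additions that the paper does not spell out.
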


\begin{proof}
We give the proof for $i=2$ and $j=3$, the other two cases are identical. It follows from~\eqref{eq:B-cycles} that at the level of the expansion coefficients with respect to the basis $\widetilde W_{\bs j}$, the Pieri rule~\eqref{eq:norm-pieri} is equivalent to the identity
\beq
\label{eq:Pieri-Psi-g}
\left(V_{23}^{-1}+(1-U_{23}^2)V_{23}\right)\cdot (\Psi_\gamma^{-1}g_{\bs l}) = \sum_{a,b}~\widetilde{A}_{a,b}\Psi_\gamma^{-1}g_{\bs l + a\bs{\delta^2} + b\bs{\delta^3}}.
\eeq
This is not so difficult to check using the explicit formula for the coefficient $ \Psi_\gamma^{-1}g_{\bs l}$ above.
Alternatively, in view of the intertwining relation
$$
\left(V_{23}^{-1}+(1-U_{23}^2)V_{23}\right)\circ \Psi_\gamma^{-1} = \Psi_\gamma^{-1} \circ Z_{23},
$$
where
%\begin{align*}
%Z_{23} = V_{23}^{-1}+(1-t^2U_{12}^2U_{13}^2U_{23}^2)(1-U_{23}^2)V_{23} + t U_{23}^2\hr{q^{-2}V_{13}^{-1}V_{12}^{-1}V_{23}^2 - U_{12}^2V_{12}V_{13}^{-1} - U_{13}^2V_{13}V_{12}^{-1}}
%\end{align*}
$$
Z_{23} = V_{23}^{-1}+(1-t^2U_{12}^2U_{13}^2U_{23}^2)(1-U_{23}^2)V_{23} + q^{-2}t U_{23}^2V_{12}^{-1}V_{13}^{-1} \hr{V_{23}^2 - U_{12}^2V_{12}^2 - U_{13}^2V_{13}^2}
$$
we can translate the identity~\eqref{eq:Pieri-Psi-g} for $\Psi_\gamma^{-1}g_{\bs l}$ into the following identity for the simpler eigenfunctions $g_{\bs l}$ defined by~\eqref{eq:easy-eigenfn}:
\begin{multline*}
\sum_{a,b \in \hc{\pm}}{\widetilde B}_{a,b} g_{\bs l}(\bs j+a\bs{\delta^2}+b\bs{\delta^3}) + q^{-2}t q^{2j_{23}}g_{\bs l}(\bs j+2\bs{\delta^1}-\bs{\delta^2}+\bs{\delta^3}) = \sum_{a,b \in \hc{\pm}}\widetilde{A}_{a,b}g_{\bs l+a\bs{\delta^2}+b\bs{\delta^3}}(\bs j),
\end{multline*}
where
$$
\widetilde B_{+,+} = 1, \qquad
\widetilde B_{+,-} = -t q^{2j_3}, \qquad
\widetilde B_{-,+} = -t q^{2j_2}, \qquad
\widetilde B_{-,-} = (1-t^2q^{\underline{\bs j}})(1-q^{2j_{23}}).
$$
The latter identity is straightforward to verify using the functional equation for the $q^2$-Pochhammer symbol.
\end{proof}

From this we easily deduce the relation between the two normalizations. Indeed, 
setting
$$
N_{X_7}(\bs l)=q^{\hm{\bs l}^2} (t^2q^2;q^2)_{\frac{1}{2}\underline{\bs l}} \prod_{a=1}^3 \frac{(q^2;q^2)_{l'_a}}{(q^2t^2;q^2)_{l_a}},
$$
it follows that the $A_{+,+}$-term in the Pieri rules for the renormalization $N_{X_7}(\bs l) \cdot \phi_{\bs l}$ becomes equal to $q^{-2}t^2$. We can do a similar thing for the Pieri rules in~\cite{AS19}, and this tells us the ratio between the two normalizations: setting
$$
N_{AS}(\bs l) = (t^4;q^2)_{\frac{1}{2}\underline{\bs l}}^{-1} \prod_{a=1}^3 \frac{(t^4;q^2)_{l_a}}{(t^2;q^2)_{l'_a}}
$$
brings the coefficient $A_{+,+}$ for the basis $N_{AS}(\bs l)\Phi_{\bs l}$ to $t$. Thus we conclude that
$$
\Phi_{\bs l} = (tq^{-2})^{\frac{1}{2}\bs l}\frac{N_{X_7}(\bs l) }{N_{AS}(l) }\phi_{\bs l}.
$$

\section{Analytic theory of the genus 2 DAHA}
\label{sec:analytic}
The cluster realization of the algebra $\mathbb{SH}_{g=2}$ provided by Theorem~\ref{thm:g2-embed} allows one to define an analytic analog of its representation by difference operators on the space of symmetric polynomials. Indeed, by the general construction of~\cite{FG09}, the universally Laurent algebra $\widehat \Lbb^q_{\bs{X_7}}$ has a family of \emph{positive} representations parametrized by two real numbers: the Planck's constant $\hbar\in\mathbb{R}$, related to $q$ via $q =e^{\pi i \hbar^2}$, together with a real number $\tau\in\mathbb{R}$ which determines the character by which the centre of $\widehat \Lbb^q_{\bs{X_7}}$ acts. In more detail, the underlying linear space of these representations is the dense \emph{Fock-Goncharov Schwartz space} $\mathcal{S}\subset \Hc$ inside a Hilbert space $\Hc\simeq L_2(\mathbb{R}^3, d\bs x)$. The cluster modular group, and hence the mapping class group of~$\Sigma_{2,0}$, acts on this Hilbert space by unitary intertwiners. The Schwartz space $\mathcal{S}$ carries an action not only of $\widehat\Lbb^q_{\bs{X_7}}$, but also of its \emph{modular double} 
$$
\widehat \Lbb^{q, \tilde q}_{\bs{X_7}} = \widehat \Lbb^q_{\bs{X_7}} \otimes_{\mathbb{C}} \widehat \Lbb^{\tilde q}_{\bs{X_7}}.
$$
Here we set
$$
\quad q = e^{\pi i\hbar^2} \qquad\text{and}\qquad \tilde q = e^{\pi i\hbar^{-2}},
$$
so that the quantum parameters for the two factors are related by the modular transformation $\hbar\mapsto 1/\hbar$.
The analytic theory of quantum cluster varieties thus provides us with a natural representation of the modular double of $\mathbb{SH}_{g=2}$.  

In this context, one can consider the spectral problem for the commuting operators $\mathcal{O}_{A_i}$, and attempt to construct a unitary joint eigenfunction transform for them. In the genus 1 case, this program was carried out in the paper~\cite{DFK+24}, where the eigenfunctions were identified with matrix coefficients of the mapping class group element
$$
S= \begin{pmatrix} 0&-1\\1&0\end{pmatrix}\in SL(2;\mathbb{Z}).
$$
Similarly to the genus 1 case, one can present the genus 2 Macdonald eigenfunction as a matrix coefficient of the mapping class $\sigma$, defined in~\eqref{eq:sigma-elt}, and we expect this description to shed light on the symmetries and bispectral properties of the genus 2 Macdonald functions, see e.g.~\cite{DFK23}. We hope to return to this aspect of the analytic theory of $\mathbb{SH}_{g=2}$ on a future occasion.

\section{Semi-classical limit}
\label{sec:class}

In this section we recall the main constructions and results of \cite{CS23} in order to connect the algebra $\widehat\Lbb_{X_7}^q$ to several well-known Poisson manifolds. First, let us briefly recall the setup of cluster Poisson varieties, see~\cite{FG06}. A quiver $Q$ determines a toric chart
$$
\Tc_Q = \Spec\big(\Tc_Q^q\big|_{q=1}\big).
$$
with a Poisson bracket defined on the natural toric coordinates by
$$
\hc{y_j,y_k} = \eps_{jk} y_j y_k.
$$
Cluster mutations define the gluing data between pairs of ``neighboring'' charts, and are given by the $q=1$ specialization of the quantum ones. Similarly, the classical universally Laurent algebra $\widehat\Lbb_{\bs Q}$ is the $q=1$ specialization of the quantum universally Laurent algebra $\widehat\Lbb_{\bs Q}^q$. The cluster Poisson variety $\Xc_{\bs Q}$ is then defined as
$$
\Xc_{\bs Q} = \Spec\hr{\widehat\Lbb_{\bs Q}}.
$$
Since the formulas for cluster mutations are subtraction-free, it makes sense to talk about the \emph{positive part} $\Xc_{\bs Q}^+$ of the cluster variety $\Xc_{\bs Q}$, defined by the condition that the cluster coordinates in any, hence in all, cluster charts take real positive values.

In the case $Q=X_7$, the cluster Poisson variety $\Xc_{\bs{X_7}}$ is equipped with a Poisson bivector field of corank 1. A Casimir function generating the Poisson centre of $\widehat \Lbb_{X_7}$ can be chosen as
$$
C = y_7 \cdot \prod_{i=1}^6 \sqrt{y_i}
$$
in the variables of the initial cluster. The main result of \cite{CS23} is a construction of a surjective Poisson map
$$
\kappa \colon \mathbb V^+_{\bs{X_7}}(C-1) \longra \Tc_{2,0}
$$
from the totally positive part of the subvariety $\mathbb V_{\bs{X_7}}(C-1)$ cut out of $\Xc_{\bs{X_7}}$ by the equation $C=1$, onto the Teichm\"uller space $\Tc_{2,0}$ of hyperbolic metrics on a closed surface of genus 2. The map $\kappa$ is not bijective, but has finite fibers.

The isomorphism $\kappa$ is derived from constructing global log-canonical coordinates on the subgroup $U \subset GL_3(\R)$ of unipotent 3-by-3 upper triangular matrices, equipped with the structure of a symplectic groupoid. The objects of the \emph{symplectic groupoid} $\Mc$ are elements $A \in U$, and the morphisms are pairs $(B,A) \in GL_3 \times U$, such that $A' = BAB^t \in U$. The groupoid $\mathcal M$ is equipped with a canonical symplectic form, see~\cite{Wei88}. The push-forward of the dual nondegenerate Poisson bracket determines a natural Poisson bracket on $U$, which was studied in~\cite{Dub96, Dub99, Uga99} in the context of Frobenius manifolds and isomonodromic deformations.

Let $\widetilde{GL}_3$ denote a symplectic leaf of maximal dimension in the group $GL_3$ endowed with the standard Poisson--Lie structure. For any  $B\in\widetilde{GL}_3$ there exists a unique $A\in U$, such that the pair $(B,A)$ is a morphism in $\Mc$. In this way we obtain a Poisson map
$$
\eta \colon \WGL_3 \longra U \times U, \qquad B \longmapsto (A,A').
$$
Its image coincides with the subvariety of $U^{\times 2}$ cut out by the equation
$$
\Mgt(A) = \Mgt(A'),
$$
where $\Mgt$ is the Markov function on $U$, defined via
$$
\Mgt(A) = \det(A+A^t).
$$
Now consider the quiver $X_6$ shown on Figure~\ref{fig:X6}. As was shown in~\cite{CS23}, both the source and the target of the map $\eta$ admit Poisson maps from the cluster chart $\Tc_{X_6}$. This in turn yields the following commutative diagram:
$$
\begin{tikzcd}
\WGL_3 \arrow{rr}{\eta} && U^{\times 2} \\
& \Tc_{X_6} \arrow{lu}{\alpha} \arrow[swap]{ru}{\beta}
\end{tikzcd}
$$
where the map $\alpha \colon \Tc_{X_6} \to \WGL_3$ is surjective, and the image of $\beta \colon \Tc_{X_6} \to U^{\times 2}$ coincides with that of $\eta$.

\begin{figure}[h]
\begin{tikzpicture}[every node/.style={inner sep=0, minimum size=0.45cm, thick, draw, circle}, thick, x=1cm, y=0.866cm]

\node (v1) at (-2,0) {\footnotesize{1}};
\node (v2) at (-1,2) {\footnotesize{2}};
\node (v3) at (1,2) {\footnotesize{3}};
\node (v4) at (2,0) {\footnotesize{4}};
\node (v5) at (0,-2) {\footnotesize{5}};
\node (v6) at (0,0) {\footnotesize{6}};

\draw [->] (v6) to (v2);
\draw [->] (2.-105) to (1.45);
\draw [->] (2.-135) to (1.75);
\draw [->] (v1) to (v6);
\draw [->] (v6) to (v4);
\draw [->] (4.105) to (3.-45);
\draw [->] (4.135) to (3.-75);
\draw [->] (v3) to (v6);
\draw [->] (v5) to (v6);

\end{tikzpicture}
\caption{The quiver $X_6$.}
\label{fig:X6}
\end{figure}
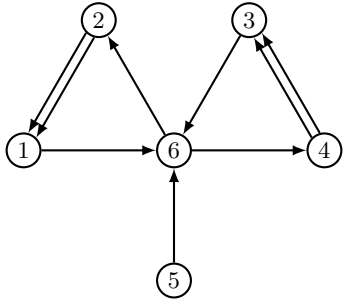

In order to relate the above commutative diagram to Teichm\"uller spaces, let us recall a Poisson map $\rho \colon \Tc_{1,1} \to U$ constructed in~\cite{CF00}, where $\Tc_{1,1}$ is the Teichm\"uller space of genus one hyperbolic surfaces with one hole, equipped with the Goldman Poisson bracket. Then we have
$$
\rho^* \colon \Mgt \longmapsto 2\cosh(\ell/2),
$$
where $\ell$ is the hyperbolic length of the boundary of the hole. On the other hand, the operation of cutting $\Gamma_{2,0}$ along the separating curve labelled $\Mgt$ in Figure~\ref{fig:genus2curve} induces a Poisson map
$$
\xi \colon \Tc_{2,0} \longra \Tc_{1,1}\times \Tc_{1,1}
$$
whose image is cut out by the equation $\cosh(\ell_1/2) = \cosh(\ell_2/2)$. Then a Poisson surjection
$$
\iota \colon \Tc_{X_6}^+ \longra \Tc_{2,0}
$$
was constructed in~\cite{CS23}, making the following diagram of Poisson maps commutative:
$$
\begin{tikzcd}
\Tc_{1,1}^{\times 2} \arrow{r}{\rho^{\times 2}} & U^{\times 2} \\
\Tc_{2,0} \arrow{u}{\xi} & \Tc_{X_6}^+ \arrow{l}{\iota} \arrow[swap]{u}{\beta}
\end{tikzcd}
$$
Upon an attempt to make the map $\iota$ into a $\Gamma_{2,0}$-equivariant map $\Xc_{\bs{X_6}}^+ \to \Tc_{2,0}$, where $\Gamma_{2,0}$ is the mapping class group of $\Sigma_{2,0}$, it was discovered that the Dehn twist along a cycle crossing the separating curve $\Mgt$, see Figure~\ref{fig:genus2curve}, is not realized as a cluster transformation. This drawback was, however, remedied by the Theorem~\ref{thm:genus-2}, which constitutes the main result of~\cite{CS23}.\footnote{Which also happens to be Theorem 7.1 in \emph{loc.cit.}}

In what follows, we denote by $G_\gamma \in \Oc(\Tc_{2,0})$ the geodesic length of an element $\gamma \in \pi_1(\Sigma_{2,0})$. Then the elements $G_{B_{ij}}$ for $1 \le i \le j \le 3$ along with $G_{A_1}$ and $G_{A_3}$, see Figure~\ref{fig:genus2curve}, generate $\C(\Tc_{2,0})$ as a Poisson algebra. Namely, any element $G_\gamma$ may be expressed through them via the skein relation
$$
G_\alpha G_\beta = G_{\alpha\beta} + G_{\alpha^{-1}\beta}
$$
and the Goldman Poisson bracket
$$
\hc{G_\alpha, G_\beta} = \frac12\hr{G_{\alpha\beta} - G_{\alpha^{-1}\beta}},
$$
both of which hold for any $\alpha, \beta \in \pi_1(\Sigma_{2,0})$ such that $\hm{\alpha \cap \beta} = 1$. %Then the first part of Theorem 7.1 in~\cite{CS23} claims 

\begin{theorem}[\cite{CS23}]
\label{thm:genus-2}
The mapping class group $\Gamma_{2,0}$ acts on $\Xc_{\bs{X_7}}$ via cluster transformations and preserves the locus $\mathbb V^+_{\bs{X_7}}(C-1)$. Moreover, there exists a $\Gamma_{2,0}$-equivariant finite Poisson cover
$$
\kappa \colon \mathbb V^+_{\bs{X_7}}(C-1) \longra \Tc_{2,0},
$$
such that the map $\kappa^* \colon \Oc(\Tc_{2,0}) \to \widehat\Lbb_{\bs{X_7}}/\ha{C-1}$ reads
\beq
\label{eq:GB}
\begin{aligned}
G_{B_{12}} &\longmapsto (y_5y_6)^{\frac12} + (y_6/y_5)^{\frac12} + (y_5y_6)^{-\frac12}, \\
G_{B_{13}} &\longmapsto (y_3y_4)^{\frac12} + (y_4/y_3)^{\frac12} + (y_3y_4)^{-\frac12}, \\
G_{B_{23}} &\longmapsto (y_1y_2)^{\frac12} + (y_2/y_1)^{\frac12} + (y_1y_2)^{-\frac12},
\end{aligned}
\eeq
and
\beq
\label{eq:GA}
\begin{aligned}
G_{A_1} &\longmapsto y_7(y_2y_1y_3y_5)^{\frac12} + G_{B_{23}}(y_3y_5)^{\frac12} + y_7^{-1}(y_2y_1y_3y_5)^{-\frac12}(1+y_3)(1+y_5), \\
G_{A_3} &\longmapsto y_7(y_6y_1y_3y_5)^{\frac12} + G_{B_{12}}(y_1y_3)^{\frac12} + y_7^{-1}(y_6y_1y_3y_5)^{-\frac12}(1+y_1)(1+y_3).
\end{aligned}
\eeq
\end{theorem}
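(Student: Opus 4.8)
The plan is to deduce the theorem from the $X_6$ picture assembled above together with the passage from $X_6$ to $X_7$. Recall that we have the Poisson surjection $\iota\colon\Tc_{X_6}^+\to\Tc_{2,0}$ fitting into the commutative square with the cutting map $\xi\colon\Tc_{2,0}\to\Tc_{1,1}^{\times2}$, with $\beta\colon\Tc_{X_6}^+\to U^{\times2}$, and with $\rho^{\times2}$, where the image of $\beta$ is exactly the locus $\{\Mgt(A)=\Mgt(A')\}\subset U^{\times2}$, which under $\rho^{\times2}$ corresponds to $\{\cosh(\ell_1/2)=\cosh(\ell_2/2)\}$, i.e.\ to the image of $\xi$. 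The first step is to record that the subgroup of $\Gamma_{2,0}$ generated by the Dehn twists not crossing $\Mgt$ — the three $B$-twists together with the twists along $A_1$ and $A_3$ — already acts on $\Xc^+_{\bs{X_6}}$ by cluster transformations compatibly with $\iota$; equivariance here is checked only on the generators $G_{B_{ij}},G_{A_1},G_{A_3}$ of $\Oc(\Tc_{2,0})$, using that the skein relation $G_\alpha G_\beta=G_{\alpha\beta}+G_{\alpha^{-1}\beta}$ and the Goldman bracket are intertwined with the cluster mutations by the $q=1$ specialization of the formulas of Lemmas~\ref{eq:lem-dehn-b} and~\ref{eq:lem-dehn-a}.

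The second, and main, step is to turn the remaining Dehn twist — the one along a curve crossing $\Mgt$, which is \emph{not} a cluster transformation of $X_6$ — into a cluster transformation after adjoining the seventh vertex. Concretely, $\Xc_{\bs{X_7}}$ carries the corank-one Poisson bracket with Casimir $C=y_7\prod_{i=1}^6\sqrt{y_i}$, and one identifies $\mathbb V^+_{\bs{X_7}}(C-1)$ with a locus inside the $X_6$ model so that $\iota$ pulls back to a Poisson map $\kappa\colon\mathbb V^+_{\bs{X_7}}(C-1)\to\Tc_{2,0}$. One then checks that the composite cluster transformation $\gamma$ of~\eqref{eq:gamma-elt} and the involution $\sigma$ of~\eqref{eq:sigma-elt}, evaluated at $q=1$, realize respectively the missing twist and the exchange $G_{A_k}\leftrightarrow G_{B_{ij}}$; combined with the $B$-twists this yields the desired $\Gamma_{2,0}$-action by cluster transformations, and preservation of $\mathbb V^+_{\bs{X_7}}(C-1)$ is automatic since cluster mutations preserve the Casimir. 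Equivariance of $\kappa$ is then checked on the generating length functions, now using that $\gamma$ carries each $G_{A_k}$ to $G_{B_{ij}}$ — a relation certified by computing both sides in the cluster coordinates of the appropriate chart.

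The third step is to pin down the formulas~\eqref{eq:GB}--\eqref{eq:GA}. The $B$-cycle formulas are the $q=1$ specialization of~\eqref{eq:B-cycles}, i.e.\ the trace-of-monodromy computation on a cylinder recalled in Section~\ref{sec:cylinder}. The $A$-cycle formulas are obtained by applying the mutation sequence defining $\gamma^{-1}$ to $G_{B_{ij}}$ and reading off the answer in the initial cluster, reproducing the $q=1$ limit of~\eqref{eq:A-cycles}; only $G_{A_1}$ and $G_{A_3}$ need to be recorded since, together with the $G_{B_{ij}}$, they already generate $\Oc(\Tc_{2,0})$ under skein and bracket. Finally, $\kappa$ is a finite cover rather than an isomorphism because the cluster coordinates $y_i$ are recovered from the length functions only up to the sign choices in the square roots $\sqrt{y_i}$ appearing in~\eqref{eq:GB}, subject to the single constraint $C=1$, so each fiber is finite and nonempty over the totally positive $\Tc_{2,0}$.

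The hard part will be the second step: verifying that the explicit composite $\gamma$, and not merely some abstract cluster transformation, genuinely intertwines the cluster dynamics with the Goldman/skein description of the $\Mgt$-crossing Dehn twist, and that adjoining vertex $7$ is precisely what is needed for this. This rests on the special combinatorics of the exceptional finite mutation type quiver $X_7$ and cannot be extracted from soft arguments — one must carry out the mutation computation and match it against the topology.
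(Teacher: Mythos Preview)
The paper does not contain a proof of this theorem: it is quoted verbatim as the main result of~\cite{CS23}, and the only sentence following the statement is the observation that~\eqref{eq:GB} and~\eqref{eq:GA} are the $q=1$ specializations of~\eqref{eq:B-cycles} and~\eqref{eq:A-cycles}. So there is nothing in the present paper to compare your proposal against; what you have written is a plausible outline of how the argument in~\cite{CS23} might be organized, extrapolated from the narrative in Section~\ref{sec:class}.

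That said, there is a concrete misidentification in your second step. You assert that the composite $\gamma$ of~\eqref{eq:gamma-elt} realizes the ``missing'' Dehn twist along a curve crossing $\Mgt$, while $\sigma$ realizes the exchange $G_{A_k}\leftrightarrow G_{B_{ij}}$. In the paper, \emph{both} $\gamma$ and $\sigma$ are exchange-type elements: $\gamma$ is introduced precisely because $\gamma(G_{A_k})=G_{B_{ij}}$, and $\sigma$ is the involution with $\sigma(L_{A_k})=L_{B_{ij}}$. Neither is claimed to be a Dehn twist. The curve crossing $\Mgt$ in Figure~\ref{fig:genus2curve} is $A_2$, and the missing generator is $\tau_{A_2}$; the way the paper obtains the $A$-cycle twists as cluster transformations is implicitly by conjugating the $B$-cycle twists by $\sigma$ (this is how Lemma~\ref{eq:lem-dehn-a} is derived from Lemma~\ref{eq:lem-dehn-b}), not by invoking $\gamma$. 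Your argument would need to be reworked accordingly. Also, the passage from $X_6$ to $X_7$ is not literally ``adjoining the seventh vertex'': comparing Figures~\ref{fig:X6} and~\ref{fig:X7}, the arrow structure at vertices $5$ and $6$ changes, so the identification of $\mathbb V^+_{\bs{X_7}}(C-1)$ with a locus in the $X_6$ model is more delicate than your sketch suggests.
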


It only remains to notice that the formulas~\eqref{eq:GB} and~\eqref{eq:GA} are the $q=1$ specializations of the formulas~\eqref{eq:B-cycles} and~\eqref{eq:A-cycles} respectively.

\bibliographystyle{alpha}

\begin{thebibliography}{HKKR00}

\bibitem[AS19]{AS19}
{S.\,Arthamonov, S.\,Shakirov.}
``Genus two generalization of $A_1$ spherical DAHA.''
\emph{Selecta Mathematica} 25 (2019): 1-29.

\bibitem[BBP02]{BBP02}
{R.\,Bezrukavnikov, A.\,Braverman, L.\,Positselskii.}
``Gluing of abelian categories and differential operators on the basic affine space.''
\emph{Journal of the Institute of Mathematics of Jussieu} 1.4 (2002): 543-557.

\bibitem[BS12]{BS12}
{I.\,Burban, O.\,Schiffmann.}
``On the Hall algebra of an elliptic curve, I.''
\emph{Duke Mathematical Journal} 161, no. 7 (2012): 1171-1231.

\bibitem[BZ05]{BZ05}
{A.\,Berenstein, A.\,Zelevinsky.}
``Quantum cluster algebras.''
\emph{Advances in Mathematics} 195, no. 2 (2005): 405-455.

\bibitem[CF00]{CF00}
{L.\,Chekhov, V.\,Fock.}
``Observables in 3D gravity and geodesic algebras.''
\emph{Czechoslovak Journal of Physics} 50, no. 11 (2000): 1201-1208.

\bibitem[Che05]{Che05}
{I.\,Cherednik}
``Double affine Hecke algebras.''
\emph{London Mathematical Society Lecture Note Series} vol. 319,  Cambridge University Press, Cambridge (2005).

\bibitem[CS21]{CS21}
{J.\,Cooke, P.\,Samuelson.}
``On the genus two skein algebra.''
\emph{Journal of the London Mathematical Society} 104, no. 5 (2021): 2260-2298.

\bibitem[CS23]{CS23}
{L.\,Chekhov, M.\,Shapiro.}
``Symplectic groupoid and cluster algebras.''
\emph{arXiv:2304.05580} (2023).

\bibitem[DFK10]{DFK10}
{P.\,Di Francesco, R.\,Kedem.}
``Discrete non-commutative integrability: Proof of a conjecture by M. Kontsevich.''
\emph{International Mathematics Research Notices} 2010, no. 21 (2010): 4042-4063.

\bibitem[DFK11]{DFK11}
{P.\,Di Francesco, R.\,Kedem.}
``Non-commutative integrability, paths and quasi-determinants.''
\emph{Advances in Mathematics} 228 (2011): 97-152.

\bibitem[DFK18]{DFK18}
{P.\,Di Francesco, R.\,Kedem.}
``Difference equations for graded characters from quantum cluster algebra.''
\emph{Transformation Groups} 23 (2018): 391-424.

\bibitem[DFK23]{DFK23}
{P.\,Di Francesco, R.\,Kedem.}
``Duality and Macdonald difference operators.''
\emph{arXiv:2303.04276} (2023).

\bibitem[DFK\textsuperscript{+}24]{DFK+24}
{P.\,Di Francesco, R.\,Kedem, S.\,Khoroshkin, G.\,Schrader, A.\,Shapiro.}
``Ruijsenaars wavefunctions as  modular group matrix coefficients.''
\emph{arXiv:2402.14214} (2024).

\bibitem[Dub96]{Dub96}
{B.\,Dubrovin.}
``Geometry of 2D topological field theories.''
\emph{Integrable Systems and Quantum Groups. Lecture Notes in Mathematics} vol 1620, Springer, Berlin, Heidelberg (1996).

\bibitem[Dub99]{Dub99}
{B.\,Dubrovin.}
``Painlev\'e transcendents in two-dimensional topological field theory.''
\emph{The Painlev\'e Property: One Century Later} New York, NY: Springer New York (1999): 287-412.

\bibitem[FG06]{FG06}
{V.\,Fock, A.\,Goncharov.}
``Moduli spaces of local systems and higher Teichm\"uller theory.''
 \emph{Publications Math\'ematiques de l'IH\'ES} 103 (2006): 1-211.

\bibitem[FG09]{FG09}
{V.\,Fock, A.\,Goncharov.}
``The quantum dilogarithm and representations of quantum cluster varieties.''
\emph{Inventiones mathematicae} 175, no. 2 (2009): 223-286.

\bibitem[GS19]{GS19}
{A.\,Goncharov, L.\,Shen.}
``Quantum geometry of moduli spaces of local systems and representation theory''
\emph{arXiv:1904.10491} (2019).

\bibitem[HKKR00]{HKKR00}
{T.\,Hoffmann, J.\,Kellendonk, N.\,Kutz, N.\,Reshetikhin.}
``Factorization Dynamics and Coxeter--Toda Lattices.''
\emph{Communications in Mathematical Physics} 212 (2000): 297-321.

\bibitem[SS18]{SS18}
{G.\,Schrader, A.\,Shapiro.}
``On $ b $-Whittaker functions.''
\emph{arXiv:1806.00747} (2018).

\bibitem[SS19]{SS19}
{G.\,Schrader, A.\,Shapiro.}
``$K$-theoretic Coulomb branches of quiver gauge theories and cluster varieties.''
\emph{arXiv:1910.03186} (2019).

\bibitem[SV11]{SV11}
{O.\,Schiffmann, E.\,Vasserot.}
``The elliptic Hall algebra, Cherednik Hecke algebras and Macdonald polynomials.''
\emph{Compositio Mathematica} vol. 147, no. 1, (2011): 188-234.

\bibitem[Uga99]{Uga99}
{M.\,Ugaglia.}
``On a Poisson structure on the space of Stokes matrices.''
\emph{International Mathematics Research Notices} 1999, no. 9 (1999): 473-493.

\bibitem[Wei88]{Wei88}
{A.\,Weinstein.}
``Coisotropic calculus and Poisson groupoids.''
\emph{Journal of the Mathematical Society of Japan} 40, no. 4 (1988): 705-727.

\bibitem[Wil15]{Wil15}
{H.\,Williams.}
``$Q$-systems, factorization dynamics, and the twist automorphism.''
\emph{International Mathematics Research Notices} 2015, no. 22 (2015): 12042-12069.

\end{thebibliography}

\end{document}